\newtheorem{theorem}{THEOREM}[section]
\newtheorem{definition}{DEFINITION}[section]
\newtheorem{corollary}{COROLLARY}[section]
\newtheorem{remark}{REMARK}[section]
\newtheorem{proposition}{PROPOSITION}[section]
\numberwithin{equation}{section}
\theoremstyle{plain}
\newtheorem{example}{EXAMPLE}[section]
\begin{document}

\title{Anti-MANOVA on Compact Manifolds with Applications to 3D Projective Shape Analysis}

\author{{Hwiyoung Lee, Vic Patrangenaru}
\\ \it{Department of Statistics, Florida State University}}

\maketitle

\begin{abstract}
Methods of hypotheses testing for equality of extrinsic antimeans on compact manifolds are unveiled in this paper. The two and multiple sample problem for antimeans on compact manifolds is addressed for large samples via asymptotic distributions, as well as for small samples using nonparametric bootstrap. An example of face differentiation using 3D VW antimean projective shape analysis for data extracted from digital camera images is also given.\\

\textbf{Keywords} : object data analysis, extrinsic antimeans, CLT for extrinsic sample antimeans, nonparametric bootstrap, anti-MANOVA, 3D projective shape from digital camera images
\end{abstract}

\section{Introduction}
{As numbers are slowly moving to the back burner of Statistics research, they leave the forefront to objects, represented for simplicity as points on a complete metric space. Such metric spaces have a richer structure than the good old numerical spaces, nevertheless they often see themselves embedded in such spaces or, in the infinite dimensional case, in Hilbert spaces. The new object space structure, leads to new statistics, some of them unimaginable in the flat world of classical nonparametric data analysis. Data sitting on object spaces often time mirrors their topological structure, leading to new location and spread parameters, such as {\em means of finite indices} (see Patrangenaru et al.(2019)\cite{PaBuPaOs:2019}) or {\em anti-covariance matrices} (see Wang and Patrangenaru (2018)\cite{WaPa:2018}), that are giving a fuller description of object data. These new local, and global statistics are delivering a more elegant quantitative and qualitative edge over classical statistics, when it comes to Big Data analysis of Complex type. Along these lines, in this paper we introduce extrinsic anti-MANOVA on object spaces that have a smooth structure.}\\
From its inception, inference on manifolds advanced in two separate directions: one on density estimation (see Kim(1998)\cite{Ki:1998}), the other in parameter estimation (see Hendriks and Landsman(1998)\cite{HeLa:1998}), the reason being that the space of functions defined on a manifold, is a linear space, while the manifold in itself is not, thus calling for a different type of data analysis. Our paper brings this aspect of nonlinearity in {\em extrinsic data analysis}, and calls for more emphasis on the analysis of {\em 3D scenes from digital camera images}, where today is the bulk of data due to technological advances.

{In Section \ref{s:anti}, after introducing the extrinsic antimean, a recently introduced population parameter for a random object on a compact object space (see eg Patrangenaru et al(2016)\cite{PaYaGu:2016}), one derives a CLT for sample extrinsic antimeans. A statistic for two sample tests for extrinsic antimeans on compact manifolds is given in section \ref{ssc:2vw-anti}.
Section \ref{sc4} is dedicated to anti-MANOVA on manifolds. The nonparametric anti-MANOVA test developed in this section, is detailed further in the case of 3D projective shape data in subsection \ref{ssc:manova-3Dproj}, where the 3D projective shape space of projective shapes of landmark configurations of $k$-ads, containing a projective frame at given landmark indices, leads to a anti-MANOVA projective shape analysis on the manifold $(\mathbb RP^3)^{k-5}.$ The asymptotic distributions of a Hotelling like statistic associated with the anti-MANOVA hypotheses testing problem is shown to be key to the data analysis in Theorem \ref{t:chi-antimean}, a result that is further specialized to VW anti-MANOVA in Theorem \ref{t:chi-square-vw}. In Section \ref{sc:faces} we consider an example of 3D projective shape analysis of faces from digital camera imaging data.
The paper concludes with a discussion on future directions in extrinsic object data analysis.}

\section{Extrinsic Antimeans on Object Spaces}\label{s:anti}
{
Fr\'echet (1948)\cite{Fr:1948} noticed that for high complexity data, such as the shape of a random contour, numbers or vectors do not provide a meaningful representation. To investigate these kind of data, he introduced the notion of {\em element}, nowadays called {\em object}. In that paper, he mentioned that an object can represent for example ``the shape of an egg randomly taken from a basket of eggs". Fr\'echet's visionary concepts, were nevertheless hard to handle computationally during his lifetime. It took many decades, until such data became the bread and butter of modern data analysis. In particular various types of shapes of configurations extracted from digital images were represented as points on projective shape spaces (see \cite{MaPa:2005}, \cite{PaLiSu:2010}), on affine shape spaces(see \cite{PaMa:2003}, \cite{Su:2006}), or on Kendall shape spaces (see \cite{Kend:1984}, \cite{DrMa:1998}). To analyze the mean and variance of the random object $X$ on a compact object space $(\mathcal M,\rho)$, Fr\'echet defined what we call now the {\em (second order) Fr\'echet function} given by
\begin{equation}\label{eq:frechet-f1}
\mathcal F(p) =  \mathbb E( \rho^2 (p,x)),
\end{equation}
the maximizers of $\mathcal F$ in \eqref{eq:frechet-f1} forming the {\em Fr\'echet antimean set}. In case $\mathcal M,$ is a smooth manifold, and $\rho = \rho_g$ is the geodesic distance associated with a Riemannian structure $g$ on $\mathcal M,$ {\bf there are no necessary and sufficient conditions for the existence of a unique maximizer} of $\mathcal F$, therefore in general, with the possible exception of compact flat Riemannian manifolds, like high dimensional flat tori, or flat Klein bottles, it is preferred to work with a ``chord" distance on $\mathcal M$ induced  by the Euclidean distance in $\mathbb{R}^{N}$ via an embedding $j: \mathcal{M} \rightarrow \mathbb{R}^{N},$ and in this case, the Fr\'echet function becomes
\begin{equation}\label{eq:frechet-f2}
\mathcal{F}(p)= \int_{\mathcal{M}}  \| j(x) - j(p) \|^{2}_{0} Q(dx),
\end{equation}
where $\| \cdot\|_0$ is the Euclidean norm in  $\mathbb{R}^{N}$, $Q = P_X$ is the probability measure on $\mathcal M,$ associated with the r.o. $X$ on $\mathcal M.$ In this setting, if the extrinsic antimean set has one point only, that point is called {\bf extrinsic antimean} of $X,$ and is labeled $\alpha\mu_{j,E}(Q)$, or simply $\alpha\mu_E,$ when $j$ and $Q$ are known. This happens iff the mean vector $\mu$ of $j(X)$ is {\em $\alpha j$-nonfocal}, meaning that its farthest projection on $j(\mathcal M)$, $P_{F,j}(\mu),$ is well defined (see Patrangenaru et al.(2016)\cite{PaYaGu:2016}).
Also, given $X_1, \dots, X_n$ i.i.d.r.v.'s from $Q$, their {\em extrinsic sample antimean (set)} is the extrinsic antimean (set) of the empirical distribution $\hat Q_n = {\frac{1}{n}} \sum_{i=1}^n \delta_{X_i}$ (see eg Patrangenaru et al (2016)\cite{PaYaGu:2016}).
\begin{example}\label{eg:VW-antimean}(see Wang and Patrangenaru(2018)\cite{WaPa:2018})
Assume $Q$ is a probability distribution on the complex projective space $\mathbb{C}P^{k-2}$ and $j$ is its VW embedding, given by $j([z])=\frac{1}{z^*z}zz^*.$ Let $\{[Z_{r}], \| Z_{r} \|, r = 1, \dots, n\}$ be i.i.d.r.o.'s from $Q$. We say that $Q$ is $\alpha$ VW-nonfocal if $Q$ is $\alpha j$-nonfocal, w.r.t. the VW embedding $j$.  Then $(a)$ $Q$ is $\alpha$ VW-nonfocal iff $\lambda$, the smallest eigenvalue of $E[Z_{1}Z_{1}^{*}]$ is simple and in this case $\alpha \mu _{j,E}{Q} = [m],$ where $m$ is an eigenvector of $E[Z_{1}Z_{1}^{*}]$ corresponding to $\lambda$, with $\parallel m \parallel = 1$ and $(b)$ The sample VW antimean $\alpha \overline{X}_{E} = {[m]}$, where $m$ is an eigenvector of norm 1 of $J = \frac{1}{n} \sum^n_{i=1}Z_i Z^*_i$, $\| Z_i\| = 1, i = 1, \dots, n$, corresponding to the smallest eigenvalue of $J.$
\end{example}

Using a moving frame approach $\grave{a}$ la analysis of extrinsic means in Bhattacharya and Patrangenaru (2003,2005)\cite{BhPa:2003,BhPa:2005}, one may develop a methodology for extrinsic antimean estimation (see Patrangenaru et al(2016a)\cite{PaGuYa:2016}). Assume $j$ is an embedding of a $d$-dimensional compact manifold $\mathcal M$ in $\mathbb R^N$, and $Q$ is a $\alpha j$-nonfocal probability measure on $\mathcal M$ such that $j(Q)$ has finite moments of order 2. Let $\mu$ and $\Sigma$ be the mean and covariance matrix of $j(Q)$ regarded as a probability measure on $\mathbb R^N$. Let $\alpha\mathcal F$ be the set of $\alpha j$-focal points of $j(M)$, and let $P_{F,j}:\alpha\mathcal F^c \rightarrow j(M)$ be the farthest projection on $j(M)$.
Assume $x \rightarrow (f_1(x),\ldots,f_d(x))$ is a local frame field on an open subset of $M$ such that for each $x\in M$, $(d_xj(f_1(x)),\ldots,d_xj(f_d(x)))$ are orthonormal vector in $\mathbb R^N$. A local frame field $p  \rightarrow (e_1(p),e_2(p),\ldots,e_N(p))$ defined on an open neighborhood $U\subseteq \mathbb R^N$ is {\em adapted to the embedding} $j$ if it is an orthonormal frame field and $\forall x\in j^{-1}(U),e_r(j(x))=d_xj(f_r(x)),r=1,\ldots,d$.

Let $e_1,e_2,\ldots,e_N$ be the canonical basis of $\mathbb R^k$ and assume $(e_1(p),e_2(p),\ldots, e_N(p))$ is an adapted frame field around $P_{F,j}(\mu)=j(\mu_{\alpha E}).$ Then $d_\mu P_{F,j}(e_b)\in T_{P_{F,j}(\mu)}j(M)$ is a linear combination of $e_1(P_{F,j}(\mu)),e_2(P_{F,j}(\mu)),\ldots, e_d(P_{F,j}(\mu))$:
\begin{equation}
d_\mu P_{F,j}(e_b)=\sum_{a=1}^d(d_\mu  P_{F,j}(e_b))\cdot e_a( P_{F,j}(\mu))e_a( P_{F,j}(\mu)).
\end{equation}
By the delta method, $n^{1/2}(P_{F,j}(\overline{j(X)})-P_{F,j}(\mu))$ converges weakly to $N_N(0_N,\alpha\Sigma_\mu),$ where $\overline{j(X)}=\frac{1}{n}\sum_{i=1}^nj(X_i)$ and
\begin{equation}
\begin{split}
\alpha\Sigma_\mu=[\sum_{a=1}^d d_\mu P_{F,j}(e_b)\cdot e_a(P_{F,j}(\mu))e_a(P_{F,j}(\mu))]_{b=1,\ldots,N}\\
 \times\Sigma[\sum_{a=1}^d d_\mu P_{F,j}(e_b)\cdot e_a(P_{F,j}(\mu))e_a(P_{F,j}(\mu))]^T_{b=1,\ldots,N}
\end{split}
\end{equation}
Here $\Sigma$ is the covariance matrix of $j(X_1)$ w.r.t the canonical basis $e_1,e_2,\ldots, e_N$. The asymptotic distribution $N_N(0_N,\alpha\Sigma_\mu)$ is degenerate and the support of this distribution is on $T_{P_{F,j}}j(M)$, since the range of $d_\mu P_{F,j}$ is a subspace of $T_{P_{F,j}(\mu)}j(M)$. Note that $d_\mu P_{F,j}(e_b)\cdot e_a(P_{F,j}(\mu))=0$ for $a = d+1, \ldots, N.$ The tangential component $tan(v)$ of $v\in \mathbb R^N$, w.r.t. the basis $e_a(P_{F,j}(\mu))\in T_{P_{F,j}(\mu)}j(M),a=1,\ldots,d$ is given by

\begin{equation}\label{eq:tan}
tan(v)=[e_1(P_{F,j}(\mu))^Tv,\ldots,e_d(P_{F,j}(\mu))^Tv]^T.
\end{equation}

From \eqref{eq:tan}, $(d_{\alpha\mu_E}j)^{-1}(tan(P_{F,j}(\overline{(j(X))})-P_{F,j}(\mu)))=\sum_{a=1}^d \overline X_j^a f_a$ has the following covariance matrix w.r.t. the basis $f_1(\alpha\mu_E),\ldots,f_d(\alpha\mu_E):$
\begin{equation}\label{eq:anti-cov}
\begin{split}
\alpha\Sigma_{j,E}=e_a(P_{F,j}(\mu))^T\alpha\Sigma_\mu e_b(P_{F,j}(\mu))_{1\leq a,b\leq d}\\
=[\Sigma d_\mu P_{F,j}(e_b)\cdot e_a(P_{F,j}(\mu))]_{a=1,\ldots,d}\Sigma\\
\times [\Sigma d_\mu P_{F,j}(e_b)\cdot e_a(P_{F,j}(\mu))]^T_{a=1,\ldots,d}
\end{split}
\end{equation}
}
{
\begin{definition}

The matrix $\alpha\Sigma_{j,E}$ given above is the {\em extrinsic anticovariance matrix} of the $\alpha j$ -nonfocal distribution $Q$ (of $X_1$) w.r.t. the basis $f_1(\mu_{\alpha E}),\ldots,f_d(\mu_{\alpha E}).$ When $j$ is fixed, the subscript $j$ in $\alpha\Sigma_{j,E}$ will be omitted. If rank $\alpha\Sigma_\mu=d$, then $\alpha\Sigma_{j,E}$ is invertible and we define the $j$-standardized sample antimean vector
\begin{equation}
a\overline{Z}_{j,n}=:n^{1/2}\alpha\Sigma_{j,E}^{-1/2}(\overline X_j^1,\ldots,\overline X_j^d)^T.
\end{equation}
\end{definition}

We recall the following 

\begin{theorem}\label{t:CLT-antimean}(Patrangenaru et al(2016)\cite{PaGuYa:2016})
Assume $\{X_r\}_{r=1,\ldots,n}$ is a random sample from the $\alpha j$-nonfocal distribution $Q$. Let $\mu=E(j(X_1))$ and let $(e_1(p),e_2(p),\ldots,e_k(p))$ be an orthonormal frame field adapted to $j$. Then (a) the tangential component of the extrinsic sample antimean $a\overline X_{E}$ has asymptotically a normal distribution in the tangent space to the $d$ dimensional manifold $M$ at $\alpha\mu_{E}(Q)$ with mean $0_d$ and covariance matrix $n^{-1}\alpha\Sigma_{j,E}$, and (b) if $\alpha\Sigma_{j,E}$ is nonsingular, the $j$-standardized mean vector $\overline{\alpha Z}_{j,n}$ converges weakly to a random vector with a multivariate $N_d(0_d,I_d)$ distribution.
\end{theorem}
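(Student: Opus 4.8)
The plan is to follow the moving-frame and delta-method strategy used for extrinsic means in Bhattacharya and Patrangenaru (2003, 2005), with the nearest-point projection replaced throughout by the farthest-point projection $P_{F,j}$. First I would apply the classical multivariate CLT in the ambient space: since $j(Q)$ has finite second-order moments, $n^{1/2}(\overline{j(X)}-\mu)$ converges weakly to $N_N(0_N,\Sigma)$, where $\mu=E(j(X_1))$ and $\Sigma$ is the covariance matrix of $j(X_1)$ with respect to the canonical basis.

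The second step is to show that the farthest-point projection is well defined and smooth in a full neighborhood of the nonfocal mean $\mu$. Because $Q$ is $\alpha j$-nonfocal, $\mu$ lies in the open set $\alpha\mathcal F^c$ of $\alpha j$-nonfocal points, and an implicit-function-theorem argument shows that for $y$ in a neighborhood of $\mu$ the maximizer $p$ of $p\mapsto \|y-p\|_0^2$ over the compact manifold $j(M)$ is unique and is a nondegenerate critical point, so that $y\mapsto P_{F,j}(y)$ is $C^\infty$ there, with differential $d_\mu P_{F,j}$ given by the formula displayed just before the statement. By the SLLN, $\overline{j(X)}\to\mu$ almost surely, so with probability tending to one $\overline{j(X)}$ lies in that neighborhood, the empirical distribution $\hat Q_n$ is $\alpha j$-nonfocal, and $P_{F,j}(\overline{j(X)})=j(a\overline X_{E})$; hence the asymptotics of $a\overline X_{E}$ are governed by those of $P_{F,j}(\overline{j(X)})$.

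Third, I would invoke the delta method for $P_{F,j}$ at $\mu$: combining it with the ambient CLT gives that $n^{1/2}(P_{F,j}(\overline{j(X)})-P_{F,j}(\mu))$ converges weakly to $N_N(0_N,\alpha\Sigma_\mu)$ with $\alpha\Sigma_\mu=d_\mu P_{F,j}\,\Sigma\,(d_\mu P_{F,j})^T$. Since the range of $d_\mu P_{F,j}$ is contained in $T_{P_{F,j}(\mu)}j(M)$, the tangential projection $tan(\cdot)$ of \eqref{eq:tan} is injective on that range and loses no information; pushing the limiting Gaussian forward through $(d_{\alpha\mu_E}j)^{-1}$ and reading off coordinates in the frame $f_1(\alpha\mu_E),\dots,f_d(\alpha\mu_E)$ yields that the tangential component of $a\overline X_{E}$ is asymptotically $N_d(0_d,n^{-1}\alpha\Sigma_{j,E})$ with $\alpha\Sigma_{j,E}$ as in \eqref{eq:anti-cov}, which is part (a). For part (b), when $\alpha\Sigma_\mu$ has rank $d$ — equivalently $\alpha\Sigma_{j,E}$ is invertible — the symmetric root $\alpha\Sigma_{j,E}^{-1/2}$ exists, and the continuous mapping theorem applied to the linear map $v\mapsto n^{1/2}\alpha\Sigma_{j,E}^{-1/2}v$ shows $a\overline Z_{j,n}$ converges weakly to $N_d(0_d,I_d)$.

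The main obstacle is the second step: establishing that $P_{F,j}$ extends to a differentiable map on a neighborhood of $\mu$ and computing $d_\mu P_{F,j}$. This rests on the nondegeneracy of the Hessian of the squared-distance-to-$y$ functional at its maximizer on $j(M)$, which is exactly what $\alpha j$-nonfocality encodes; once that nondegeneracy is verified, the smoothness and the derivative formula follow from the implicit function theorem applied to the critical-point equations, and everything after that is the delta method together with routine bookkeeping in the adapted frame.
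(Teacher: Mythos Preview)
Your proposal is correct and mirrors the paper's own derivation: the paper presents the argument immediately \emph{before} stating the theorem (which it then attributes to the cited reference), applying the delta method to the farthest-point projection $P_{F,j}$ at $\mu$ after the ambient CLT for $\overline{j(X)}$, and then reading off tangential coordinates in the adapted frame to obtain $\alpha\Sigma_{j,E}$. The only difference is that you make explicit the smoothness of $P_{F,j}$ near $\mu$ via an implicit-function-theorem argument, whereas the paper simply assumes this and proceeds directly to the delta-method step; otherwise the structure is identical.
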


As a particular case, when $j$ is the inclusion map of a submanifold $\mathcal M$ of $\mathbb R^k$, we get the following result for $\alpha$-nonfocal distributions on $\mathcal M:$

\begin{corollary}
Assume $M\subseteq \mathbb R^k$ is a $d$-dimensional closed submanifold of $\mathbb R^k.$ Let $\{X_r\}_{r=1,\ldots,n}$ be i.i.d.r.o's from the nonfocal distribution $Q$ on $M$, and let $\mu=E(X_1)$ and assume the covariance matrix $\Sigma$ of $j(Q)$ is finite. Let $(e_1(p),e_2(p),\ldots,e_N(p))$ be an orthonormal frame field adapted to $M$. Let $\alpha\Sigma_E:=\alpha\Sigma_{j,E}$, where $j: M\rightarrow \mathbb R^N$ is the inclusion map. Then (a) $n^{1/2}tan(j(a\overline X_E)-j(\alpha\mu_E))$ converges weakly to $N_d(0_d,\alpha\Sigma_E)$, and (b) if $\alpha\Sigma_\mu$ induces a nonsingular bilinear form on $T_{j(\mu_{\alpha E})}j(M)$, then $\|\overline {AZ}_{j,n}\|^2$ converges weakly to the chi-square distribution $\chi_d^2.$
\end{corollary}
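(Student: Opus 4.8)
The plan is to derive the corollary as a direct specialization of Theorem \ref{t:CLT-antimean} to the case where $j:M\hookrightarrow\mathbb R^N$ is the inclusion map, and then convert the asymptotic normality of part (a) into the chi-square limit in part (b) by the standard quadratic-form argument. For part (a), I would first observe that when $j$ is the inclusion, $j(a\overline X_E)-j(\alpha\mu_E)=a\overline X_E-\alpha\mu_E$ as vectors in $\mathbb R^N$, and that $\overline{j(X)}=\overline X$. Theorem \ref{t:CLT-antimean}(a) already asserts that the tangential component of the extrinsic sample antimean is asymptotically normal in $T_{\alpha\mu_E(Q)}M$ with mean $0_d$ and covariance $n^{-1}\alpha\Sigma_{j,E}$; since $\alpha\Sigma_E$ is defined to be exactly $\alpha\Sigma_{j,E}$ for the inclusion map, part (a) is essentially a restatement, once one notes that $d_{\alpha\mu_E}j$ is (the restriction of) the identity, so the intrinsic tangential coordinates $(\overline X_j^1,\ldots,\overline X_j^d)$ coincide with $tan(a\overline X_E-\alpha\mu_E)$ computed via \eqref{eq:tan}. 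The only thing to check carefully is the bookkeeping identifying $tan(P_{F,j}(\overline X)-P_{F,j}(\mu))$ with $tan(a\overline X_E-\alpha\mu_E)$: by definition of the extrinsic sample antimean, $j(a\overline X_E)=P_{F,j}(\overline X)$ whenever $\overline X$ is $\alpha j$-nonfocal, which holds with probability tending to one since $\mu$ is $\alpha j$-nonfocal and the nonfocal set is open, and $P_{F,j}(\mu)=j(\alpha\mu_E)$.

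For part (b), I would argue as follows. Under the hypothesis that $\alpha\Sigma_\mu$ induces a nonsingular bilinear form on $T_{j(\mu_{\alpha E})}j(M)$, the $d\times d$ matrix $\alpha\Sigma_E=\alpha\Sigma_{j,E}$ is invertible, so the standardized vector $\overline{AZ}_{j,n}=n^{1/2}\alpha\Sigma_{j,E}^{-1/2}(\overline X_j^1,\ldots,\overline X_j^d)^T$ is well defined for large $n$. By Theorem \ref{t:CLT-antimean}(b), $\overline{AZ}_{j,n}\Rightarrow N_d(0_d,I_d)$. Then $\|\overline{AZ}_{j,n}\|^2$ is a continuous function of $\overline{AZ}_{j,n}$, so by the continuous mapping theorem it converges weakly to $\|W\|^2$ where $W\sim N_d(0_d,I_d)$; and $\|W\|^2=\sum_{i=1}^d W_i^2$ is a sum of $d$ independent squared standard normals, hence $\chi_d^2$ by definition. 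This completes part (b).

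The main (really the only) obstacle is the measure-zero technicality in part (a): the extrinsic sample antimean $a\overline X_E$ is only defined when the sample mean $\overline X$ is $\alpha j$-nonfocal, i.e.\ lies in $\alpha\mathcal F^c$, so strictly speaking the random vector $n^{1/2}tan(j(a\overline X_E)-j(\alpha\mu_E))$ is defined only on an event $A_n$. The fix is routine: since $\mu\in\alpha\mathcal F^c$ and $\alpha\mathcal F^c$ is open, and $\overline X\to\mu$ a.s.\ by the SLLN, $P(A_n)\to 1$, so one may define the statistic arbitrarily off $A_n$ without affecting the weak limit. With that caveat handled, part (a) follows from the delta-method computation already carried out in the excerpt (the derivation of $\alpha\Sigma_\mu$ and $\alpha\Sigma_{j,E}$ via $d_\mu P_{F,j}$), specialized to $j=\mathrm{incl}$, and everything else is bookkeeping with the adapted frame field $(e_1(p),\ldots,e_N(p))$ and the identification $e_r(x)=d_xj(f_r(x))$ for $r=1,\ldots,d$.
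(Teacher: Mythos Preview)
Your proposal is correct and matches the paper's intent: the paper states this result as an immediate corollary of Theorem~\ref{t:CLT-antimean} without giving a separate proof, and your argument---specializing the theorem to the inclusion map and then applying the continuous mapping theorem to pass from $N_d(0_d,I_d)$ to $\chi_d^2$---is exactly the intended derivation. The measure-zero technicality you flag is a nice clarification but is not addressed in the paper itself.
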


The CLT for extrinsic sample antimeans can not be used to construct
confidence regions for extrinsic antimeans, since the population extrinsic covariance matrix is a nuisance parameter.
We then consider a consistent
estimator of $\alpha\Sigma_{j,E}$ as follows. Note that $\overline {j(X)}$  is a
consistent estimator of $\mu$, $d_{\overline{j(X)}}P_{F,j}\to_P d_\mu P_{F,j}$, and
$e_a(P_{F,j}(\overline{j(X)}))\to_P e_a(P_{F,j}(\mu))$ and
\begin{equation}
S_{j,n} = n^{-1}
\sum (j(X_r) - \overline{j(X)})(j(X_r) - \overline{j(X)})^T
\end{equation}
 is a consistent
estimator of $\alpha\Sigma_\mu$. It follows that
\begin{eqnarray}
\left[ \sum_{a=1}^d d_{\overline{j(X)}} P_{F,j} (e_b) \cdot
e_a(P_{F,j}(\overline{j(X)}))e_a(P_{F,j}(\overline{j(X)}))\right]S_{j,n} \nonumber \\
\left[ \sum_{a=1}^d d_{\overline{j(X)}} P_{F,j} (e_b) \cdot
e_a(P_{F,j}(\overline{j(X)}))e_a(P_{F,j}(\overline{j(X)}))\right]^T\nonumber
\end{eqnarray}
is a consistent estimator of  $\alpha\Sigma_{\mu}$, and $tan_{P_{F,j}(\overline{j(X)})}v$
is a consistent estimator of $tan(v)$.

Therefore if we take the components of the bilinear form associated
with this matrix  w.r.t.\\
$e_1(P_{F,j}(\overline{j(X)})),e_2(P_{F,j}(\overline{j(X)})),...,e_d(P_{F,j}(\overline{j(X)}))$,
we get a consistent estimator of $\alpha\Sigma_{j,E}$
\begin{eqnarray} \label{eq:ext-covariance}
{aS_{E,n}}=\nonumber \\
= [[ \sum d_{\overline{j(X)}} P_{F,j}
(e_b) \cdot e_a(P_{F,j}(\overline{j(X)}))]_{a=1,...,d}]
\cdot\nonumber \\
\cdot S_{j,n} [[\sum d_{\overline{j(X)}} P_{F,j}
(e_b) \cdot e_a(P_{F,j}(\overline{j(X)}))]_{a=1,...,d}]^T.
\end{eqnarray}
\begin{remark} As a result, if we assume that $j: M \to \mathbb{R}^N$ is an embedding
of $\mathcal M$ in $\mathbb{R}^k$ and $\{X_r\}_{r=1,...,n}$ is are i.i.d.r.o.'s
from the $\alpha j$-nonfocal distribution $Q$, and $\mu =E(j(X_1))$,
$j(X_1)$ has finite second order moments, and $\alpha\Sigma_{j,E}$ of $X_1$ is nonsingular, then if
$(e_1(p),e_2(p) ,....,e_k(p))$ be an orthonormal frame field adapted
to  $j$, it follows that ${aS_{E,n}}$ is \eqref{eq:ext-covariance}, then for $n$ large enough, ${aS_{E,n}}$ is nonsingular with
probability converging to one,  and (a)
\begin{equation}
n^{1\over 2} {aS_{E,n}}^{-{1\over 2}}(P_{F,j}(\overline{j(X)}) -P_{F,j}(\mu))
\end{equation}
 converges weakly to a $N ( 0_d ,I_d)$ distributed r.vector,  so that
\begin{equation}
n \|{aS_{E,n}}^{-{1\over 2}} tan(P_{F,j}(\overline{j(X)}) - P_{F,j}(\mu))\|^2
\end{equation}
converges weakly to a $\chi^2_d$ distributed r.v., and (b) the statistic
\begin{equation}
n^{1\over 2} {aS_{E,n}}^{-{1\over 2}} tan_{P_{F,j}(\overline{j(X)})}
(P_{F,j}(\overline{j(X)}) -P_{F,j}(\mu))
\end{equation}
 converges weakly to a $N ( 0_d ,I_d)$ r. vector,  so that
\begin{equation}
n \|{aS_{E,n}}^{-{1\over 2}} tan_{P_{F,j}(\overline{j(X)})}
(P_{F,j}(\overline{j(X)}) -P_{F,j}(\mu))\|^2
\end{equation}
converges weakly to  $\chi^2_d$ distributed r.v.
\end{remark}
\begin{corollary} Under the hypothesis above, a confidence
region for $\alpha\mu_E$ at asymptotic level
$1 - \alpha$ is given by (a) $C_{n,\alpha}:=j^{-1}(U_{n,\alpha}),$ where
$$U_{n,\alpha}=\{\mu \in j(M): n \|{aS_{E,n}}^{-{1\over 2}} tan(P_{F,j}(\overline{j(X)}) -P_{F,j}(\mu)\|^2 \leq \chi_{d,1-\alpha}^2\},$$ or by
(b) $D_{n,\alpha}:=j^{-1}(V_{n,\alpha}),$ where
$$V_{n,\alpha}=\{\mu \in j(M): n \|{aS_{E,n}}^{-{1\over 2}} tan_{P_{F,j}(\overline{j(X)})}
(P_{F,j}(\overline{j(X)}) -P_{F,j}(\mu)\|^2 \leq \chi_{d,1-\alpha}^2\}.$$
\end{corollary}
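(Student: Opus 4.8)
The plan is to obtain this corollary as an immediate consequence of the Remark that precedes it, since a confidence region for a parameter is by definition the acceptance region of the associated pivotal statistic, read backwards. First I would invoke part (a) of the Remark: under the stated hypotheses---$j: M\to\mathbb R^N$ an embedding, $Q$ being $\alpha j$-nonfocal so that $P_{F,j}(\mu)=j(\alpha\mu_E)$ is well defined, $j(X_1)$ with finite second order moments, and $\alpha\Sigma_{j,E}$ nonsingular---the statistic $T_n := n\,\|aS_{E,n}^{-1/2}\, tan(P_{F,j}(\overline{j(X)}) - P_{F,j}(\mu))\|^2$, evaluated at the true $\mu=E(j(X_1))$, converges weakly to a $\chi^2_d$ distributed random variable. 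Since the $\chi^2_d$ cumulative distribution function is continuous and strictly increasing on $(0,\infty)$, the quantile $\chi^2_{d,1-\alpha}$ is a continuity point of it, so the weak convergence gives $\lim_{n\to\infty} P(T_n \le \chi^2_{d,1-\alpha}) = P(\chi^2_d \le \chi^2_{d,1-\alpha}) = 1-\alpha$.

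Next I would identify the event $\{T_n \le \chi^2_{d,1-\alpha}\}$ with the event $\{\mu \in U_{n,\alpha}\}$ read at the true parameter value; because $j$ is an embedding, hence injective, applying $j^{-1}$ turns this into $\{\alpha\mu_E \in C_{n,\alpha}\}$, using $P_{F,j}(\mu)=j(\alpha\mu_E)$. Combining with the limit above yields $\lim_{n\to\infty} P(\alpha\mu_E \in C_{n,\alpha}) = 1-\alpha$, which is exactly the statement that $C_{n,\alpha}$ is a confidence region for $\alpha\mu_E$ at asymptotic level $1-\alpha$, proving (a). For (b) the argument is verbatim the same, now invoking part (b) of the Remark for the estimated-tangential version of the pivot, $n\,\|aS_{E,n}^{-1/2}\, tan_{P_{F,j}(\overline{j(X)})}(P_{F,j}(\overline{j(X)}) - P_{F,j}(\mu))\|^2 \Rightarrow \chi^2_d$, which gives $\lim_{n\to\infty} P(\alpha\mu_E \in D_{n,\alpha}) = 1-\alpha$.

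The only point requiring a little care---and the step I would single out as the main, albeit minor, obstacle---is the behaviour on the event where $aS_{E,n}$ is singular, on which the pivot is undefined. Here I would note that the Remark already supplies $P(aS_{E,n}\ \text{nonsingular}) \to 1$, so $T_n$ may be set arbitrarily (say $+\infty$) on that exceptional event without changing any weak limit; equivalently, one appeals to Slutsky's theorem together with the consistency facts recorded just before the Remark---$\overline{j(X)} \to_P \mu$, $d_{\overline{j(X)}}P_{F,j}\to_P d_\mu P_{F,j}$, $e_a(P_{F,j}(\overline{j(X)}))\to_P e_a(P_{F,j}(\mu))$, and $S_{j,n}\to_P \alpha\Sigma_\mu$---which jointly force $aS_{E,n}\to_P \alpha\Sigma_{j,E}$ and hence $aS_{E,n}^{-1/2}\to_P \alpha\Sigma_{j,E}^{-1/2}$ on a set of probability tending to one. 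No estimate beyond Theorem \ref{t:CLT-antimean} and the Remark is needed; the proof is a bookkeeping exercise in unwinding the definitions of $U_{n,\alpha}$, $V_{n,\alpha}$ and of asymptotic coverage.
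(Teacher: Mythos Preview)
Your proposal is correct and matches the paper's intent: the paper states this corollary without proof, treating it as an immediate consequence of the preceding Remark, and you have supplied exactly the standard pivot-inversion argument (weak convergence to $\chi^2_d$, continuity of the limiting cdf at its quantile, identification of the acceptance event with membership in $U_{n,\alpha}$ or $V_{n,\alpha}$) that underlies such a step. Your added care about the negligible event where $aS_{E,n}$ is singular is appropriate and consistent with what the Remark already records.
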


\par At this point we recall the steps that one takes to obtain
a bootstrapped statistic from a pivotal statistic. If $\{X_r\}_{r=1,...,n}$
is a random sample from  the unknown
distribution $Q$, and $\{X^*_r\}_{r=1,...,n}$ is a random sample
from the empirical $\hat
Q_n$, conditionally given $\{X_r\}_{r=1,..,.n}$, then  the statistic
$$T(X,Q) = n \|{aS_{E,n}}^{-{1\over 2}}
tan(P_{F,j}(\overline{j(X)}) -P_{F,j}(\mu))\|^2$$
given above  has the bootstrap analog
\begin{eqnarray}
T(X^*,\hat{Q}_n) = n\|{aS_{E,n}^*}^{-{1\over 2}} \nonumber \\
tan_{P_{F,j}(\overline{j(X)}))}(P_{F,j}(\overline{j(X^*)})
- P_{F,j}(\overline{j(X)}))\|^2.
\end{eqnarray}

Here ${aS_{E,n}^*}$ is obtained from ${aS_{E,n}}$ substituting
$X^*_1,....., X^*_n$ for $X_1,.....X_n$, and
$T(X^*,\hat{Q}_n)$ is obtained from $T(X,Q)$ by substituting
$X^*_1,.....,  X^*_n$ for $X_1,...., X_n$, $\overline{j(X)})$ for $\mu$
and $aS_{E,n}^*$ for ${aS_{E,n}}$. \\
The same procedure can be used for the vector valued statistic
\begin{equation}
V(X,Q) = n^{1\over 2} {aS_{E,n}}^{-{1\over 2}}
tan(P_{F,j}(\overline{j(X)}) -P_{F,j}(\mu)),
\end{equation}
and as a result we get the bootstrapped statistic
\begin{eqnarray}
V^*(X^*,\hat{Q}_n) = n^{1\over 2}{aS_{E,n}^*}^{-{1\over 2}} \nonumber \\
tan_{P_{F,j}(\overline{j(X^*)}))}(P_{F,j}(\overline{j(X^*)}) - P_{F,j}(\overline{j(X)})).
\end{eqnarray}

We then  obtain the following results:
\begin{theorem}\label{t:boot}
 Let $\{X_r\}_{r=1,...,n}$ be i.i.d.r.o.'s
from the $\alpha j$-nonfocal distribution $Q$,which has a nonzero absolutely
continuous component w.r.t.\ the volume measure on $M$ induced by $j$.
Let $\mu =E(j(X_1))$ and  assume  the extrinsic covariance matrix $\Sigma_{j,E}$ is
nonsingular and let $(e_1(p),e_2(p) ,....,e_k(p))$ be an orthonormal
frame field adapted to  $j$.
Then the distribution function of
\begin{equation}
n\|{aS_{E,n}}^{-{1\over 2}} tan(P_{F,j}(\overline{j(X)}) -P_{F,j}(\mu))\|^2
\end{equation}

can be approximated by the bootstrap distribution function of
\begin{equation}
n\| {aS_{E,n}^*}^{-{1\over2}} tan_{P_{F,j}(\overline{j(X)}))}
(P_{F,j}(\overline{j(X^*)}) - P_{F,j}(\overline{j(X)}))\|^2
\end{equation}

with a coverage error $0_p(n^{-2}).$
\end{theorem}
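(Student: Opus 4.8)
The plan is to reduce the statement to the classical Euclidean Edgeworth‑expansion and bootstrap theory of Bhattacharya and Ghosh (1978), together with the second‑order bootstrap comparison of Babu--Singh and Bhattacharya--Qumsiyeh, exactly as in the extrinsic‑mean case treated in Bhattacharya and Patrangenaru (2005). Set $Y_r=j(X_r)$. Because $\mathcal M$ is compact, $j(\mathcal M)$ is bounded, so $Y_r$ is a bounded random vector in $\mathbb R^N$ and all its moments are finite (the ``finite second moments'' hypothesis merely makes $\Sigma$ well defined). Since $Q$ is $\alpha j$‑nonfocal, $\mu=E(Y_1)$ lies off the focal set $\alpha\mathcal F$, and on $\alpha\mathcal F^{c}$ the farthest projection $P_{F,j}$ is smooth (real‑analytic in the VW case, being the bottom eigenprojection when the bottom eigenvalue is simple); likewise the tangential‑projection operator $\tan_{(\cdot)}$, the adapted frame field, the scatter matrix $S_{j,n}$ and hence the estimator $aS_{E,n}$ in \eqref{eq:ext-covariance} are smooth functions of the sample moments near their population values. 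Consequently $T(X,Q)$ is a smooth function of the Euclidean sample mean $\bar U_n=n^{-1}\sum_r U_r$ of the i.i.d.\ bounded vectors $U_r=(Y_r,\operatorname{vech}(Y_rY_r^{T}))$, with defining function smooth on a neighborhood of $EU_1$ and, thanks to nonsingularity of $\alpha\Sigma_{j,E}$, nondegenerate there. This places the problem inside the smooth‑function‑of‑means model.

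The only analytic input beyond moments is Cram\'er's condition for the law of $U_1$. Since $j$ is an embedding, the nonzero absolutely continuous component of $Q$ with respect to the $j$‑induced volume measure on $\mathcal M$ transfers to a nonzero absolutely continuous component (with respect to $d$‑dimensional Hausdorff measure) of $j(Q)$ on the $d$‑manifold $j(\mathcal M)\subset\mathbb R^N$. The quadratic ``Veronese‑type'' lift $y\mapsto(y,\operatorname{vech}(yy^{T}))$ carries $j(\mathcal M)$ to a compact $d$‑dimensional submanifold of $\mathbb R^{N+\binom{N+1}{2}}$ contained in no proper affine subspace (a nontrivial compact set is not affine, and the quadratic coordinates add further curvature), so the absolutely continuous part of the law of $U_1$ is spread over a genuinely curved piece of that submanifold; the standard curvature/stationary‑phase estimate then gives $\limsup_{\|\xi\|\to\infty}|E\,e^{i\xi\cdot U_1}|<1$. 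The analogous condition for the empirical measure $\hat Q_n$, needed so that the bootstrap statistic also admits an Edgeworth expansion, follows with probability tending to one from the same absolutely continuous component, by the argument of Babu--Singh / Bhattacharya--Ghosh.

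With Cram\'er's condition and all moments finite, Bhattacharya--Ghosh (1978) yields
\[
P\bigl(T(X,Q)\le x\bigr)=F_{\chi^2_d}(x)+n^{-1}\pi_1(x;Q)\,f_{\chi^2_d}(x)+n^{-2}\pi_2(x;Q)\,f_{\chi^2_d}(x)+o(n^{-2}),
\]
uniformly in $x$, with no half‑integer powers, since $T$ is a \emph{studentized} (hence asymptotically pivotal, asymptotically $\chi^2_d$) quadratic form in the standardized antimean vector, i.e.\ an even function of the centered mean. Conditionally on the data, with probability tending to one, the bootstrap statistic $T(X^{*},\hat Q_n)$ — built so that it is centered at $P_{F,j}(\overline{j(X)})$ and tangentially projected at $P_{F,j}(\overline{j(X)})$, mirroring $T$ exactly — satisfies the same expansion with $Q$ replaced by $\hat Q_n$. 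Subtracting, $\sup_x|P(T\le x)-P^{*}(T^{*}\le x)|\le n^{-1}\sup_x|\pi_1(x;Q)-\pi_1(x;\hat Q_n)|\,f_{\chi^2_d}(x)+O_p(n^{-2})$, and the key point, following Bhattacharya--Qumsiyeh (1989) and Hall's analysis of the symmetric bootstrap‑$t$, is that self‑normalization forces $\pi_1(\cdot;\hat Q_n)-\pi_1(\cdot;Q)=O_p(n^{-1})$ rather than the naive $O_p(n^{-1/2})$: the leading sampling fluctuations of the third and fourth moments entering $\pi_1$ through the projected mean are, to first order, cancelled by the corresponding fluctuations of $aS_{E,n}$ and of the sample‑based centering. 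This produces the coverage error $O_p(n^{-2})$.

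The main obstacle is exactly this last cancellation; it is false for the non‑studentized statistic. Establishing it requires tracking, through the chain rule for the smooth defining function, how the moments of $U_1$ enter simultaneously the ``numerator'' $P_{F,j}(\overline{j(X)})-P_{F,j}(\mu)$ and the ``denominator'' $aS_{E,n}$, and verifying that the $O_p(n^{-1/2})$ pieces of $\pi_1(\cdot;\hat Q_n)$ agree with those of $\pi_1(\cdot;Q)$; one reduces this to the corresponding statement for Hotelling's $T^{2}$ in the smooth‑function model by identifying $aS_{E,n}^{-1/2}\tan\!\bigl(P_{F,j}(\overline{j(X)})-P_{F,j}(\mu)\bigr)$ with a studentized smooth function of $\bar U_n$. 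A secondary, more routine difficulty is making the Cram\'er‑condition verification for the quadratic lift fully rigorous and transferring it to $\hat Q_n$ with probability $1-o(1)$, where the compactness of $\mathcal M$ and the embedding hypothesis keep matters manageable.
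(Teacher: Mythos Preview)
The paper does not actually prove this theorem: it is stated and then the text moves directly to the discussion of practical nonpivotal confidence regions, with no \texttt{proof} environment and no argument. The result is simply the farthest-projection analogue of the bootstrap theorem for extrinsic sample means in Bhattacharya and Patrangenaru (2005), and the authors treat it as such, so there is no ``paper's own proof'' to compare against.

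Your sketch is exactly the argument that underlies the Bhattacharya--Patrangenaru result and is what a written-out proof here would have to look like: pass to the augmented i.i.d.\ bounded vectors $U_r=\bigl(j(X_r),\operatorname{vech}(j(X_r)j(X_r)^{T})\bigr)$, recognize both the tangential component and $aS_{E,n}$ as smooth functions of $\bar U_n$ in a neighborhood of $EU_1$ (smoothness of $P_{F,j}$ off the $\alpha$-focal set being the only genuinely new check when the closest projection is replaced by the farthest), verify Cram\'er's condition from the absolutely continuous component via curvature of the quadratic lift, and invoke the Bhattacharya--Ghosh Edgeworth machinery for studentized quadratic forms whose expansions contain only integer powers of $n^{-1}$. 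That is the route the cited references take, and nothing structural changes for the antimean.

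One caution on your final paragraph. The mechanism you invoke---that studentization forces $\pi_1(\cdot;\hat Q_n)-\pi_1(\cdot;Q)=O_p(n^{-1})$ rather than the naive $O_p(n^{-1/2})$---is not how the extra power of $n$ arises. Sample cumulants entering $\pi_1$ still fluctuate at rate $n^{-1/2}$; studentization is what kills the $n^{-1/2}$ term in the expansion itself, not what improves the estimation of $\pi_1$. The $O_p(n^{-2})$ in the statement is a \emph{coverage} error for the resulting confidence region, and the additional factor comes from a separate cancellation upon inverting the two expansions at the bootstrap quantile (as in Hall's analysis of the symmetric bootstrap-$t$), not from an improved rate for $\pi_1(\hat Q_n)-\pi_1(Q)$. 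Since the paper's phrasing already conflates ``distribution-function approximation'' with ``coverage error'' and gives no proof, this is as much a warning about the theorem statement as about your last step; the rest of your reduction is sound.
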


\par A practical method of finding a nonpivotal confidence region for the extrinsic antimean, consists of considering a chart, defined around all the bootstrap antimeans $P_{F,j}(\overline{j(X^*)}$ that the antimeans of resamples to $\mathbb{R}^d$; such a confidence region in terms of simultaneous confidence intervals.

\subsection{VW antimeans on $\mathbb{R}P^m$}

In this section we consider the case when $\mathcal{M}= \mathbb{R}P^m$ is the real projective space, set of $1$-dimensional linear subspaces of $\mathbb{R}^{m+1}.$ $( \mathbb{R}P^{m},\rho_0) $ is a compact space with $\rho_0$ the chord distance induced by the Veronese Whitney (VW) embedding in the space of $(m+1)\times(m+1)$ positive semi-definite symmetric matrices, $j:\mathbb{R}P^m \to S_+(m+1, \mathbb{R})$ given by
\begin{equation}\label{eq:VW}
j([x])= xx^{T}, \|x\|=1
\end{equation}
We first must recall some properties of the VW embedding. It is an equivariant embedding, this means that it acts on the left on $S_{+}(m+1, \mathbb{R}),$ the set of nonnegative definite symmetric matrices with real coefficients, by
\begin{align}
T \cdot A= T A T^{T},	~~\forall~T\in SO(m+1), \forall~A\in S_+(m+1, \mathbb{R})\notag\\
j(T \cdot [x])= T \cdot j([x]),~~ \forall~[x] \in \mathbb{R}P^{m}, \notag
\end{align}
where $T \cdot [x] = [Tx].$\\
Also $j(\mathbb{R}P^{m})=\{ A \in S_{+}(m+1, \mathbb{R}): rank (A)=1, Tr(A)=1.\}$
And the set $\mathcal{F}$ of $j$-focal points of $j(\mathbb{R}P^{m})$ in $S_{+}(m+1, \mathbb{R}),$ is the set of matrices in $S_{+}(m+1, \mathbb{R})$ whose largest eigenvalues are of multiplicity at least 2. The induce distance id defined as follow; for $A,~B ~\in ~S(m+1, \mathbb{R})$ we define $d_0(A,B)= tr((A-B)^2).$
\bigskip
Recall that if $\mu=E(XX^T)$ is the mean of $j(Q)$ in $\mathbb{R}^{N}.$
\begin{equation}
\mathcal{F}([p]) = \| j([p])- \mu \|_{0}^{2} +\int_{\mathcal M}~\| \mu- j([x]) \|_0^{2} Q(dx)
\end{equation}
And $\mathcal{F}([p])$ is maximized if and only if $ \| j([p])- \mu \|_{0}^{2}$ is maximize with respect to $[p] \in \mathcal{M}.$
\begin{proposition}
\begin{enumerate}[(i)]
\item $(\alpha F)^c,$ set of $\alpha VW$-nonfocal points in $S_{+}(m+1, \mathbb{R}),$ is made of matrices in whose smallest eigenvalue has multiplicity 1.
\item The projection $P_{F,j}: (\alpha F)^c \to j(\mathbb{R}P^{m})$ assigns to each nonnegative definite symmetric matrix $A$, of rank 1, with a smallest eigenvalue of multiplicity 1, the matrix $j([\nu])$, where $\| \nu \|=1$  and $\nu$ is an eigenvector of  $A$ corresponding to that eigenvalue.
\end{enumerate}
\end{proposition}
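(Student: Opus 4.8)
\emph{Proof proposal.} The plan is to reduce the entire statement to an eigenvalue optimization for the mean matrix $\mu = E[XX^{T}]$. First I would invoke the identity recorded just before the statement, $\mathcal{F}([p]) = \|j([p])-\mu\|_{0}^{2} + \int_{\mathcal M}\|\mu - j([x])\|_{0}^{2}\,Q(dx)$, whose integral term does not depend on $[p]$. Hence $[p]\in\mathbb{R}P^{m}$ lies in the Fr\'echet antimean set precisely when it maximizes $g([p]):=\|j([p])-\mu\|_{0}^{2}$ over $\mathbb{R}P^{m}$, and the farthest projection $P_{F,j}(\mu)$, when it is defined (i.e.\ when this maximizer is unique), is that unique $j([p])$.

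Next I would expand $g$ using $\|p\|=1$: since $\|j([p])\|_{0}^{2} = \operatorname{tr}((pp^{T})^{2}) = (p^{T}p)^{2} = 1$, one gets $g([p]) = 1 + \|\mu\|_{0}^{2} - 2\operatorname{tr}(pp^{T}\mu) = 1 + \|\mu\|_{0}^{2} - 2\,p^{T}\mu p$. Thus maximizing $g$ over unit vectors is the same as minimizing the Rayleigh quotient $p\mapsto p^{T}\mu p$ on the unit sphere $S^{m}\subset\mathbb{R}^{m+1}$. By the spectral theorem (Courant--Fischer), this minimum equals $\lambda:=\lambda_{\min}(\mu)$, and the set of minimizers is exactly the unit sphere of the eigenspace $V_{\lambda}$ of $\mu$ associated with $\lambda$.

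Part (i) then follows from a dimension count: the projectivization of $V_{\lambda}$ is the antimean set inside $\mathbb{R}P^{m}$, which reduces to a single point iff $\dim V_{\lambda}=1$, that is iff $\lambda$ has multiplicity one; if the multiplicity is $\ge 2$ the antimean set is a whole projective subspace $\mathbb{R}P^{\dim V_{\lambda}-1}$ and $\mu$ is $\alpha$-focal. Hence $(\alpha F)^{c}$ is exactly the set of $A\in S_{+}(m+1,\mathbb{R})$ whose smallest eigenvalue is simple, which is the analog, with $\lambda_{\min}$ replacing $\lambda_{\max}$, of the stated description of the $j$-focal set. For (ii), if such an $A$ (playing the role of $\mu$) has a simple smallest eigenvalue with unit eigenvector $\nu$, unique up to sign, then the unique maximizer of $g$ is $[\nu]$, so $P_{F,j}(A)=j([\nu])=\nu\nu^{T}$, which is the asserted formula.

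The argument is essentially routine; the only point requiring a little care is the passage from ``unique maximizer of $g$ in $\mathbb{R}P^{m}$'' to ``simple smallest eigenvalue of $\mu$'': one must note that the minimizing set of the Rayleigh quotient is exactly (not merely contained in) the unit sphere of $V_{\lambda}$, so that two non-proportional minimizing vectors occur iff $\dim V_{\lambda}\ge 2$. It is also worth checking that this is consistent with the criterion recorded in Example~\ref{eg:VW-antimean} for the VW embedding of $\mathbb{C}P^{k-2}$.
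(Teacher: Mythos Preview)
Your argument is correct and is exactly the computation one would expect. Note, however, that the paper does not actually supply a proof of this proposition: it is stated immediately after the Fr\'echet-function decomposition and the remark that the set $\mathcal{F}$ of $j$-focal points (for the \emph{nearest} projection) consists of matrices whose \emph{largest} eigenvalue has multiplicity at least~$2$; the proposition is then presented as the evident dual statement for the farthest projection, with no further justification. Your write-up makes that duality explicit via the Rayleigh-quotient reduction $\|pp^{T}-A\|_{0}^{2}=1+\|A\|_{0}^{2}-2\,p^{T}Ap$, which is precisely the missing step.

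One small comment on presentation: the proposition as stated concerns an arbitrary $A\in S_{+}(m+1,\mathbb{R})$, not only the particular matrix $\mu=E[XX^{T}]$. You do eventually make this transition (``if such an $A$ (playing the role of $\mu$) \ldots''), but it would be cleaner to run the whole computation for a generic $A$ from the outset, since the Fr\'echet-function identity is only used to motivate why one cares about maximizing $\|j([p])-A\|_{0}^{2}$, and the characterization of $(\alpha F)^{c}$ and of $P_{F,j}$ is a purely geometric statement about the farthest-point map onto $j(\mathbb{R}P^{m})$. Also, the phrase ``of rank~1'' in part~(ii) of the paper's statement is almost certainly a slip (the relevant $A$ is typically full rank); your proof rightly ignores it.
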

We now have the following;
\begin{proposition}\label{p:aVW}
Let $Q$ be a distribution on $\mathbb{R}P^{m}.$
\begin{enumerate}
\item The VW-antimean set of a random object $[X], X^TX=1$ on $\mathbb RP^m,$ is the set of points $p = [v]\in V_1,$ where $V_1$ is the eigenspace corresponding to the smallest eigenvalue $\lambda(1)$ of $E(XX^T).$
\item If in addition $Q = P_{[X]}$ is  $\alpha VW$-nonfocal, then
$$\alpha\mu_{j, E}(Q)=j^{-1}(P_{F,j}(\mu))=\gamma(1) $$ where $(\lambda(a), \gamma(a))$, $a=1,.., m+1$ are eigenvalues in increasing order and the corresponding unit eigenvectors of $\mu=E(X X^{T}).$
\item Let $x_1,\dots,x_n$ be random observations from a distribution $Q$ on $\mathbb RP^m,$ such that $\overline{j(X)}$ is $\alpha$ VW-nonfocal. Then the VW sample antimean of $x_1,\dots,x_n$ is given by;
$$a \overline{x}_{j, E} =j^{-1}(P_{F,j}(\overline{j(x)}))= g(1) $$ where $(d(a), g(a))$ are the eigenvalues in increasing order and the corresponding unit eigenvectors of $\displaystyle{J = \sum_{i=1}^{n} x_{i} x^{T}_{i}}.$
\end{enumerate}
\end{proposition}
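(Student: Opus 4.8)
The plan is to reduce the maximization of the Fr\'echet function to a Rayleigh--quotient minimization, so that the VW--antimean set can be read off directly from the spectral decomposition of $E(XX^{T})$. First I would invoke the decomposition of $\mathcal{F}$ recorded just above the statement: with $\mu = E(XX^{T})$, one has $\mathcal{F}([p]) = \| j([p]) - \mu \|_{0}^{2} + \int_{\mathcal{M}} \| \mu - j([x]) \|_{0}^{2}\, Q(dx)$, whose second summand does not depend on $[p]$. Hence the VW--antimean set is precisely the set of maximizers over $[p]\in\mathbb{R}P^{m}$ of $h([p]) := \| j([p]) - \mu \|_{0}^{2}$. Expanding $h$ in the trace (Frobenius) inner product, with $j([p]) = p p^{T}$ and $\|p\| = 1$, and using $\operatorname{tr}((p p^{T})^{2}) = (p^{T}p)^{2} = 1$ together with $\operatorname{tr}(p p^{T} \mu) = p^{T} \mu p$, one obtains $h([p]) = 1 - 2\, p^{T}\mu p + \operatorname{tr}(\mu^{2})$, so maximizing $h$ is the same as minimizing the Rayleigh quotient $p \mapsto p^{T}\mu p$ over the unit sphere of $\mathbb{R}^{m+1}$.

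Next I would apply the Rayleigh--Ritz (Courant--Fischer) characterization: since $\mu$ is symmetric and nonnegative definite, $\min_{\|p\| = 1} p^{T}\mu p = \lambda(1)$, the smallest eigenvalue of $\mu = E(XX^{T})$, with the set of minimizers equal to the unit sphere of the corresponding eigenspace $V_{1}$. Projecting to $\mathbb{R}P^{m}$ yields part 1, that the VW--antimean set is $\{[v] : v\in V_{1}\}$. For part 2, $\alpha VW$--nonfocality of $Q$ means, by the preceding Proposition (i), that $\lambda(1)$ is simple, so $V_{1}$ is a line and the antimean set collapses to the single point $\gamma(1)$; moreover the farthest--projection description in the preceding Proposition (ii) identifies $P_{F,j}(\mu)$ with $\gamma(1)\gamma(1)^{T} = j(\gamma(1))$, whence $\alpha\mu_{j,E}(Q) = j^{-1}(P_{F,j}(\mu)) = \gamma(1)$. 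Part 3 is then part 2 applied to the empirical distribution $\hat{Q}_{n} = \frac{1}{n}\sum_{i=1}^{n}\delta_{x_{i}}$, whose mean vector is $\overline{j(x)} = \frac{1}{n}\sum_{i=1}^{n} x_{i} x_{i}^{T} = \frac{1}{n} J$; since rescaling by the positive constant $1/n$ leaves eigenvectors and the increasing order of eigenvalues unchanged, the ordered unit eigenvectors of $\overline{j(x)}$ are exactly $g(1),\dots,g(m+1)$, and the claim $a\overline{x}_{j,E} = g(1)$ follows.

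I do not anticipate a genuine obstacle here: the whole argument is short once the Fr\'echet function is split. The only point that requires care is the bookkeeping of eigenvalue multiplicities --- establishing that the antimean set is \emph{exactly} the projectivized $\lambda(1)$--eigenspace (so that uniqueness holds if and only if $\lambda(1)$ is simple), and consistently working with the \emph{smallest} rather than the largest eigenvalue, which is the one feature that distinguishes the antimean from the VW mean. One should also check that the stated hypotheses genuinely force $\mu$, respectively $\overline{j(x)}$, to be $\alpha VW$--nonfocal before appealing to the farthest--projection formula.
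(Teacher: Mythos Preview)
Your proposal is correct and matches the paper's approach. The paper does not supply a formal proof environment for this proposition; it is stated as an immediate consequence of the material placed just before it --- the decomposition $\mathcal{F}([p]) = \|j([p]) - \mu\|_{0}^{2} + \int_{\mathcal{M}}\|\mu - j([x])\|_{0}^{2}\,Q(dx)$ and the preceding Proposition characterizing $\alpha VW$--nonfocal points and the farthest projection $P_{F,j}$. Your argument simply makes explicit the one missing computation (expanding $\|pp^{T}-\mu\|_{0}^{2}$ to reduce to a Rayleigh quotient) and then invokes those two ingredients exactly as the paper intends, so there is no substantive difference.
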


\section{ Hypothesis testing for two VW antimean projective shapes}\label{ssc:2vw-anti}

The real projective space, $\mathbb{R}P^{m},$ is the building block in the geometric structure of the projective shape space; the projective shape
space of $k$-ads $(x_1, \dots, x_k$ in $\mathbb RP^m$ that include a {\em projective frame} at given fixed indices, can be identified with
$(\mathbb RP^m)^q,$ where $q = k-m-2.$ (see Mardia and Patrangenaru (2005)\cite{MaPa:2005}). This space is embedded via the VW-embedding
$j_q:(\mathbb{R}P^m)^q \to (S_+(m+1, \mathbb{R}))^q$ as follows:
\begin{equation}\label{eq:VW-emb} j_q([x_1], \dots, [x_q]) = (j([x_1]), \dots, j_q([x_q])),
\end{equation}
where $j$ is the VW embedding of  $\mathbb{R}P^{m},$ given in \eqref{eq:VW}.

Assume that for $a=1,2, Q_a$ are $\alpha$VW-nonfocal. We are now interested in the hypothesis testing problem:
 \begin{equation}\label{eq:hypo-means}
H_0 :\  \alpha\mu_{1,E}= \alpha\mu_{2,E} \ \text{vs.} H_a :\ \alpha\mu_{1,E} \ne \alpha\mu_{2,E},
\end{equation}
For $m = 3,$ the hypothesis \eqref{eq:hypo-means} is equivalent to the following
\begin{equation}\label{eq:hypo-means-Lie}
H_0 :\ \alpha\mu_{2,E}^{-1} \odot \alpha\mu_{1,E}= 1_{q}  \ \text{vs.} H_a :\ \alpha\mu_{2,E}^{-1} \odot \alpha\mu_{1,E} \ne 1_{q}
\end{equation}
\begin{enumerate}
\item Let $n_+ = n_1 + n_2$ be the total sample size, and assume $\lim_{n_+ \to \infty} \frac{n_{1}}{n_+} \to \lambda \in (0,1)$. Let $\varphi$ be the log chart defined in a neighborhood of
$ 1_{q}$ (see Helgason (2001)),  with $\varphi(1_{q})=0.$  Then, under $H_0$
\begin{align}
n_+^{1/2}~\varphi(a\bar{Y}^{-1}_{n_2,E} \odot a\bar{Y}_{n_1,E}) \to_{d} \mathcal{N}_{3q}(0_{3q} , \tilde\Sigma_{j_q}),
\end{align}
for some covariance matrix $ \tilde\Sigma_{j_{q}}.$
\item Assume in addition that for $a=1,2$ the support of the distribution of $Y_{a,1}$ and the VW anti mean $\alpha\mu_{a,E}$ are included in the domain of the chart $\varphi$ and $\varphi(Y_{a,1})$ has an absolutely continuous component and finite moment of sufficiently high order. Then the joint distribution\\
\begin{equation}\label{eq:antimean-VW1}
aV = {n+}^{1\over 2} \varphi(a\bar{Y}^{-1}_{n_2,E} \odot a\bar{Y}_{n_1,E})
\end{equation}
can be approximated by the bootstrap joint distribution of \\
\begin{equation}\label{eq:antimean-VW2}
\displaystyle{aV^{*}={n+}^{1/2}~\varphi(a\bar{Y^{*}}^{-1}_{n_2,E} \odot a\bar{Y}^{*}_{n_1,E}) }
\end{equation}
 \end{enumerate}
Now, from proposition \ref{p:aVW}, we get the following result that is used for the computation of the VW sample antimeans:
\begin{proposition}\label{p:aVWprsh}
 follows that given a random sample from a distribution $Q$ on $\mathbb R P^m,$ if $J_s, s = 1,\dots, q$ are the matrices $J_{s}=n^{-1}\sum_{r=1}^{n}X_{r}^{s}(X_{r}^{s})^{T},$ and if for $a = 1,\dots, m+1, d_{s}(a)$ and $g_{s}(a)$ are the eigenvalues in increasing order and  corresponding unit eigenvectors of $J_{s},$ then the VW sample antimean $a\bar{Y}_{n,E}$ is given by
\begin{equation}
\label{eq:VW-sample-mean}
a\bar{Y}_{n,E} = ([g_{1}(1)],\dots,[g_{q}(1)]).
\end{equation}
\end{proposition}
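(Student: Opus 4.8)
The plan is to derive Proposition \ref{p:aVWprsh} from the single–factor statement Proposition \ref{p:aVW}(3) by exploiting the product structure of the VW embedding $j_q$ in \eqref{eq:VW-emb}. Write each observation as $X_r = ([X_r^1],\dots,[X_r^q])$ with $\|X_r^s\| = 1$; then by \eqref{eq:VW-emb} and \eqref{eq:VW}, $j_q(X_r) = (X_r^1(X_r^1)^T,\dots,X_r^q(X_r^q)^T)$, so that $\overline{j_q(X)} = (J_1,\dots,J_q)$ with $J_s = n^{-1}\sum_r X_r^s(X_r^s)^T$. Since the Euclidean (chord) distance on the product $(S_+(m+1,\mathbb{R}))^q$ is the sum of the per–block chord distances, for any $p = ([p_1],\dots,[p_q])$ the empirical antimean (Fr\'echet) function decomposes as
\begin{equation}
\mathcal F_n(p) \;=\; \frac1n\sum_{r=1}^n \| j_q(X_r) - j_q(p)\|_0^2 \;=\; \sum_{s=1}^q \Big(\frac1n\sum_{r=1}^n \| j([X_r^s]) - j([p_s])\|_0^2\Big) \;=\; \sum_{s=1}^q \mathcal F_n^{(s)}([p_s]),
\end{equation}
where $\mathcal F_n^{(s)}$ is the empirical antimean function of the $s$-th marginal sample $[X_1^s],\dots,[X_n^s]$ on $\mathbb{R}P^m$.

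Because the $s$-th summand depends on $p$ only through $[p_s]$, maximization of $\mathcal F_n$ over $(\mathbb{R}P^m)^q$ decouples coordinatewise: a point $p$ lies in the sample VW–antimean set of the $X_r$'s if and only if each $[p_s]$ lies in the sample VW–antimean set of $[X_1^s],\dots,[X_n^s]$. In particular, the hypothesis that $\overline{j_q(X)}$ is $\alpha$VW–nonfocal is equivalent to each $J_s = \overline{j([X^s])}$ being $\alpha$VW–nonfocal, i.e. (by the Proposition preceding Proposition \ref{p:aVW}) to the smallest eigenvalue of each $J_s$ being simple; in that case the $s$-th factor antimean set is the single point $[g_s(1)]$, where $(d_s(a),g_s(a))$, $a=1,\dots,m+1$, are the eigenvalues in increasing order and corresponding unit eigenvectors of $J_s$. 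Equivalently, the farthest projection on the product, $P_{F,j_q}$, is just the blockwise application of the factor farthest projection $P_{F,j}$.

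Assembling the two observations, the sample VW antimean exists and equals
$a\bar Y_{n,E} = j_q^{-1}\big(P_{F,j_q}(\overline{j_q(X)})\big) = ([g_1(1)],\dots,[g_q(1)])$, which is exactly \eqref{eq:VW-sample-mean}. The only point needing genuine (though elementary) verification is the decoupling step: that the supremum of a sum of nonnegative blockwise terms over a product domain is attained precisely at tuples of the per–block maximizers, together with the matching equivalence between $\alpha$VW–nonfocality on the product and on each factor. Everything else is routine bookkeeping from the definition of the extrinsic antimean and from Proposition \ref{p:aVW}(3).
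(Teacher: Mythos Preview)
Your proof is correct and follows the same route the paper intends: the paper does not give a formal proof of Proposition~\ref{p:aVWprsh} at all, but simply introduces it with the clause ``from proposition~\ref{p:aVW}, we get the following result,'' treating it as an immediate consequence of the one-factor statement. Your argument makes explicit the product-structure step the paper leaves tacit---namely that under the VW embedding $j_q$ in \eqref{eq:VW-emb} the empirical Fr\'echet function splits as a sum over the $q$ axial components, so that maximization (and $\alpha$VW-nonfocality) decouple coordinatewise and Proposition~\ref{p:aVW}(3) applies to each $J_s$ separately---which is exactly what is needed and is not a different approach.
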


\section{ Extrinsic Anti-MANOVA on Compact Manifolds }\label{sc4}

\noindent Consider an embedding $j: \mathcal{M} \to \mathbb R^N,$ of a compact manifold $\mathcal{M}$ of dimension $d.$ For a=1, \dots, g, let $X_{a,1}, \ldots,X_{a,n_a}$ be i.i.d.r.o.'s on $\mathcal{M}$ with the probability measure $Q_{a}=P_{X_{a,1}}$ being $\alpha$j-nonfocal. Let $\alpha\mu_{a,E}$ be the extrinsic antimean of $Q_a,$ and $a\bar{X}_{a,E}$ be the sample extrinsic antimean of $X_{a,1}, \ldots,X_{a,n_a}.$
Define the {\em pooled extrinsic antimean} with weights $\lambda = (\lambda_1, \dots, \lambda_g)$, denoted by $\alpha\mu_{E}(\lambda)$ given by
\begin{equation}
j(\alpha\mu_{E})= P_{F,j}(\lambda_1 j(\alpha\mu_{1,E}) + \cdots + \lambda_{g}j(\alpha\mu_{g,E})). \label{E_Polled_mean}
\end{equation}
Likewise, the {\em pooled sample extrinsic antimean }, denoted by $a\bar{X}_{E} ~\in~\mathcal M$ is given by
\begin{equation}\label{E_Polled_sample_mean}
j(a\bar{X}_{E})= P_{F,j}(a\overline{j^{(p)}(X)}),
\end{equation}
where $a\overline{j^{(p)}(X)}=\frac{n_1}{n} j(a\bar{X}_{1, E}) + \cdots + \frac{n_g}{n}j(a\bar{X}_{g, E}).$ Here it is assumed that
$a\bar{X}_{a, E}$, the extrinsic sample antimean for the $a$-th sample is well defined, and $n=\sum_{a=1}^{g} n_a;$ The weights are in this case $\hat{\lambda}_a = \frac{n_a}{n}, a = 1, \dots, g.$ Under the null hypothesis

\begin{equation} H_0: \alpha\mu_{1,E}= \cdots = \alpha\mu_{g, E},\label{eq:h0-MANOVA}\end{equation}
and the usual alternative, for $b=1,\dots, g,$ we consider:

$\displaystyle{aS_{b}= (n_b)^{-1} \Sigma_{i=1}^{n_b}(j(X_{b,i})- j(a\bar{X_b}_{E})) (j(X_{b,i})- j(a\bar{X_b}_{E}))^T   }$ as a consistent estimator of $\alpha\Sigma_{b}.$ Also note that $\tan_{j(a\bar{X}_{E}) } \nu$ is a consistent estimator of $\tan_{P_{F,j}(\mu)} \forall \nu \in \mathbb{R}^{N}.$

 It follows that the {\em extrinsic sample anticovariance matrix} $aS_{b,E},$ given by
 {
\begin{align}
aS_{b,E}&=\left[ \left[\sum_{a=1}^{d} d_{a\overline{j^{(p)}(X)}}P_{F,j}(e_a) \cdot e_{i}(j(a\bar{X}_{E}))  ~e_{i}(j(a\bar{X}_{E}))\right]_{i=1,...,d} \right] \cdot ~S_{n_b}\notag\\
&~~~~\left[ \left[\sum_{a=1}^{d} d_{a\overline{j^{(p)}(X)}}P_{F,j}(e_a) \cdot e_{i}(j(a\bar{X}_{E})) e_{i}(j(a\bar{X}_{E})) \right]_{i=1,...,d} \right]^T \notag
\end{align}
is a  consistent estimator for $\alpha\Sigma_{b,E}$
}

\begin{theorem}\label{t:chi-antimean}
Assume $j: \mathcal{M} \rightarrow \mathbb{R}^N$ is an embedding of the compact manifold $\mathcal{M}$. For $a=1,...,g,$ let $\{ X_{a,i} \}_{i=1, \dots, n_a}$ be i.i.d.r.o.'s from the $j$-nonfocal distributions $\mathcal{Q}_a$ on $\mathcal{M}$. Let $\mu_a = E(j(X_{a,1}))$ and assume the extrinsic anticovariance matrices $\alpha\Sigma_{a,E}$ of $X_{a,1}$ are nonsingular. We also let $\left( e_{1}(p), ...., e_{N}(p) \right)$, for $p~\in \mathcal{M}$ be an orthonormal frame field adapted to $j$ defined in an open neighborhood of the pooled extrinsic antimean and of the set of extrinsic population antimeans. Assume that $\frac{n_a}{n}\rightarrow \lambda_a>0,\ as\ n\rightarrow \infty, \forall a = 1, \dots, k.$ Then  under \eqref{eq:h0-MANOVA},
\begin{equation}\label{eq:manova-large}
\sum_{a=1}^g n_a~ \tan_{j(\alpha\mu_E)} (j(a\bar{X}_{a,E})-j(a\bar{X}_E))^T aS_{b,E}^{-1}~\tan_{j(\alpha\mu_E)} (j(a\bar{X}_{a,E})-j(a\bar{X}_E))   \to_d    \chi_{gd}^2.
\end{equation}
and
\begin{equation}\label{eq:manova-large2}
\sum_{b=1}^g n_b~ \tan_{j(a\bar{X}_{E})} (j(a\bar{X}_{b,E})-j(a\bar{X}_E))^T aS_{b,E}^{-1}~\tan_{j(a\bar{X}_{E})} (j(a\bar{X}_{b,E})-j(a\bar{X}_E))   \to_d    \chi_{gd}^2.
\end{equation}

\end{theorem}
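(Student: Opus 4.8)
The strategy is to reduce both \eqref{eq:manova-large} and \eqref{eq:manova-large2} to the CLT for extrinsic sample antimeans (Theorem \ref{t:CLT-antimean}) together with Slutsky's theorem and the continuous mapping theorem. First I would apply Theorem \ref{t:CLT-antimean} to each group $a$ in isolation: since $\mathcal{Q}_a$ is $\alpha j$-nonfocal with nonsingular $\alpha\Sigma_{a,E}$, one has $n_a^{1/2}\,\tan_{j(\alpha\mu_{a,E})}\!\bigl(j(a\bar{X}_{a,E})-j(\alpha\mu_{a,E})\bigr)\to_d N_d(0_d,\alpha\Sigma_{a,E})$. Under the null hypothesis \eqref{eq:h0-MANOVA} all population antimeans coincide with a single value $\alpha\mu_E$, and because the $g$ subsamples are mutually independent the stacked vector $\bigl(n_a^{1/2}\tan_{j(\alpha\mu_E)}(j(a\bar{X}_{a,E})-j(\alpha\mu_E))\bigr)_{a=1,\dots,g}$ converges jointly in distribution to a centred $N_{gd}$ vector whose covariance is block-diagonal with blocks $\alpha\Sigma_{1,E},\dots,\alpha\Sigma_{g,E}$.

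Next I would deal with the recentring by the pooled antimean. By the strong law of large numbers the pooled average $a\overline{j^{(p)}(X)}$ converges to $\sum_a\lambda_a j(\alpha\mu_{a,E})=j(\alpha\mu_E)$ under \eqref{eq:h0-MANOVA}, and by continuity of the projection defining the pooled antimean on the set of $\alpha j$-nonfocal points it follows that $j(a\bar{X}_E)\to_P j(\alpha\mu_E)$; hence the adapted frame at $j(a\bar{X}_E)$ converges to the one at $j(\alpha\mu_E)$, which is precisely the ingredient that lets one pass between \eqref{eq:manova-large} and \eqref{eq:manova-large2} at the cost of an $o_p(1)$ factor. Writing $j(a\bar{X}_{b,E})-j(a\bar{X}_E)=\bigl(j(a\bar{X}_{b,E})-j(\alpha\mu_E)\bigr)-\bigl(j(a\bar{X}_E)-j(\alpha\mu_E)\bigr)$ and applying the delta method to the projection composed with the weighted average (exactly as in the computation recalled in Section \ref{s:anti}), the second summand is expressed, up to $o_p(n^{-1/2})$, as a fixed linear combination of the group terms of the first paragraph. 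One also records the consistency $aS_{b,E}\to_P\alpha\Sigma_{b,E}$ already noted before the theorem, so $aS_{b,E}^{-1}\to_P\alpha\Sigma_{b,E}^{-1}$.

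After these substitutions the left-hand side of \eqref{eq:manova-large} is a fixed continuous function --- a sum of $g$ quadratic forms on $\mathbb{R}^d$ --- of the stacked asymptotically Gaussian vector, modulo an $o_p(1)$ remainder; by the continuous mapping theorem it converges in distribution to the same function evaluated at the Gaussian limit, and \eqref{eq:manova-large2} follows in the same way with $j(a\bar{X}_E)$ in place of $j(\alpha\mu_E)$. The main obstacle is precisely this last step: one must show that the resulting limiting quadratic form is chi-square and determine its degrees of freedom, i.e.\ compute the rank of the covariance of the re-centred Gaussian limit, carefully tracking the linear relation among the $g$ re-centred group antimeans that the pooled antimean imposes and verifying the differentiability of the projection at the pooled antimean. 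This covariance-and-rank bookkeeping, rather than any single estimate, is where the real work lies.
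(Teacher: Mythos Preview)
Your outline follows the paper's proof almost exactly: apply the one–sample CLT for extrinsic sample antimeans (Theorem~\ref{t:CLT-antimean}) to each group so that under $H_0$ each standardized tangential component is asymptotically $N_d(0_d,\alpha\Sigma_{a,E})$; use independence of the $g$ samples to conclude that the sum of the $g$ quadratic forms (centered at the true $\alpha\mu_E$ and with the population anticovariances) is asymptotically $\chi^2_{gd}$; then invoke consistency of $aS_{b,E}$ for $\alpha\Sigma_{b,E}$, of $j(a\bar X_E)$ for $j(\alpha\mu_E)$, and of the adapted frame at $j(a\bar X_E)$ for the frame at $j(\alpha\mu_E)$ to reach \eqref{eq:manova-large} and \eqref{eq:manova-large2} via Slutsky.

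The one place where you diverge from the paper is the last paragraph. You flag the replacement of $j(\alpha\mu_E)$ by $j(a\bar X_E)$ \emph{inside the difference} $j(a\bar X_{a,E})-j(\cdot)$ as the ``main obstacle'', and propose a delta-method expansion of the pooled antimean followed by a rank computation on the covariance of the recentred limit. The paper does none of this bookkeeping: it simply records that $j(a\bar X_E)\to_P j(\alpha\mu_E)$ under $H_0$ and treats the passage from the $\alpha\mu_E$-centered sum \eqref{eq:limit-g-groups} to \eqref{eq:manova-large}--\eqref{eq:manova-large2} as a direct Slutsky substitution, keeping the $\chi^2_{gd}$ limit. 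So the paper's argument is shorter than what you anticipate, not longer; it does not attempt to track the linear constraint that the pooled antimean imposes on the $g$ group terms, and in particular never contemplates a reduction of degrees of freedom. Your instinct that this step deserves more care than a bare consistency appeal is reasonable --- in classical MANOVA recentring at a pooled estimate costs $d$ degrees of freedom --- but as far as matching the paper goes, you should simply invoke consistency of $a\bar X_E$, $aS_{b,E}$ and the tangent map and stop there.
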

\begin{proof}
Recall from Patrangenaru et al (2016)\cite{PaGuYa:2016}, that we have
\begin{align}
\sqrt{n_b}~\tan_{j(\alpha\mu_{E})}(j(a\bar{X}_{b,E})-j(\mu_{E})) \rightarrow_d N(0_d,a\Sigma_{b,E}), ~~~for~ b=1,2,...,g
\end{align}
where
\begin{eqnarray}\label{eq:anti-cov-group}
a\Sigma_{b,E} =\left[ \left[  \sum d_{\mu}P_{F,j}(e_b) \cdot e_{k}(P_{F,j}(\mu)) \right]_{k=1,...,d}\right]\cdot \nonumber\\ \cdot \Sigma_{b}~~\left[\left[ \sum d_{\mu}P_{F,j}(e_b) \cdot e_{k}(P_{F,j}(\mu))  \right]^T_{k=1,...,d} \right].
\end{eqnarray}
 In \eqref{eq:anti-cov-group} $\mu= \lambda_1 j(\mu_{1,E}) + \cdots + \lambda_{g}j(\mu_{g,E})$ and the $\Sigma_{a}$'s are the covariance matrices of the $j(X_{a,1})$'s with respect to the canonical basis $e_1,...,e_{N}$. And under the null, the matrices $a\Sigma_{a,E}$ are defined with respect to the basis $f_1(\alpha\mu_E),..., f_{p}(\alpha\mu_E)$ of local frame fields.  We then have for each $a=1,...,g$
\begin{equation}n_a \tan_{j(\alpha\mu_E)}(j(a\bar{X}_{a,E})-j(\alpha\mu_E))^T\alpha\Sigma_{a,E}^{-1} \tan_{j(\alpha\mu_E)}(j(a\bar{X}_{a,E})-j(\alpha\mu_E))  \to_d   \chi_{d}^2, \end{equation} and since the $g$ populations are independent, we obtain
\begin{equation}\label{eq:limit-g-groups}
\sum_{a=1}^g n_a \tan_{j(\alpha\mu_E)}(j(a\bar{X}_{a,E})-j(\alpha\mu_E))^T\Sigma_{a,E}^{-1}~\tan_{j(\alpha\mu_E)}(j(a\bar{X}_{a,E})-j(\alpha\mu_E))  \to_d   \chi_{gd}^2.
\end{equation}
Under the null hypothesis of \eqref{eq:h0-MANOVA}, $a\bar{X}_{b,E}$ is a consistent estimator of $\alpha\mu_E$, then the embedding of the pooled extrinsic sample antimean
\begin{align}
j(a\bar{X}_{E})= P_{F,j} \left( \frac{1}{n} \sum_{a=1}^{g} n_a j(a\bar{X}_{a,E})\right)  \rightarrow_{p} j(\mu_{E})
 \end{align}

And since $\forall a = 1, \dots g, aS_{a,E}$ consistently estimate $\alpha\Sigma_{a,E}$ and $\tan_{j(a\bar{X}_{E})}$ is a consistent estimator of $\tan_{j(\alpha\mu_E)}$, we obtain \eqref{eq:manova-large} and \eqref{eq:manova-large2}.

\end{proof}
\begin{corollary}\label{cor:conf-reg-general-null}
Under the null hypothesis in \eqref{eq:h0-MANOVA}, confidence  regions for $\alpha\mu_E$ of asymptotic level $ 1 - c $ are given by $C_{n, c}^{(g)}$ and $D_{n, c}^{(g)}$ as follows
\begin{itemize}
\item $C_{n, c}^{(g)}= j^{-1}(U_{n , c})$ where \\$U_{n, c}= \{  j (\nu) \in j(\mathcal{M}): \sum_{a=1}^g n_a  \left\| aS_{a,E}^{-1/2}~\tan_{ j (\nu)}(j(a\overline{X}_{a,E})- j (\nu) )\right\|^2 \leq \chi^{2}_{gd, 1-c} \}$
\item $D_{n, c}^{(g)}= j^{-1}(V_{n , c})$ where \\$V_{n, c}= \{  j(\nu) \in j(\mathcal{M}):  \sum_{a=1}^g n_a \left\| aS_{a,E}^{-1/2}~\tan_{j(\bar{X}_{E})}(j(a\overline{X}_{a,E})-j(\nu) )\right\|^2 \leq \chi^{2}_{gd, 1-c} \}$
\end{itemize}
where $a\overline{X}_{a,E}$ is the pooled extrinsic sample antimean.
\end{corollary}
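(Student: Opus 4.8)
The plan is to obtain both confidence regions by inverting the two asymptotically pivotal statistics supplied by Theorem \ref{t:chi-antimean}. Under the null hypothesis \eqref{eq:h0-MANOVA} there is a single common extrinsic antimean $\alpha\mu_E = \alpha\mu_{1,E} = \cdots = \alpha\mu_{g,E}$, and for a candidate point $\nu \in \mathcal{M}$ set
\[
aT_n(\nu) = \sum_{a=1}^g n_a \bigl\| aS_{a,E}^{-1/2}\,\tan_{j(\nu)}\bigl(j(a\bar X_{a,E}) - j(\nu)\bigr)\bigr\|^2 ,
\]
together with its companion $aT_n'(\nu)$ obtained by replacing $\tan_{j(\nu)}$ with $\tan_{j(a\bar X_E)}$. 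Equations \eqref{eq:manova-large} and \eqref{eq:manova-large2} say exactly that $aT_n(\alpha\mu_E) \to_d \chi^2_{gd}$ and $aT_n'(\alpha\mu_E) \to_d \chi^2_{gd}$.

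First I would fix the true common antimean $\nu_0 = \alpha\mu_E$ and deduce from the weak convergence above that $P\bigl(aT_n(\nu_0) \le \chi^2_{gd,1-c}\bigr) \to 1-c$ as $n \to \infty$. By the definition of $U_{n,c}$ this reads $P\bigl(j(\nu_0) \in U_{n,c}\bigr) \to 1-c$, hence $P\bigl(\nu_0 \in j^{-1}(U_{n,c})\bigr) \to 1-c$, which is precisely the statement that $C_{n,c}^{(g)} = j^{-1}(U_{n,c})$ is a confidence region for $\alpha\mu_E$ at asymptotic level $1-c$. Here I use that $j$ is an embedding, so $j^{-1}$ is well defined and continuous on $j(\mathcal{M})$ and the pulled-back set is a genuine subset of $\mathcal{M}$; the required measurability of $U_{n,c}$ follows because $\nu \mapsto aT_n(\nu)$ is continuous in $j(\nu)$ on the open set where the farthest projection $P_{F,j}$ and the adapted frame field are defined, which by the hypotheses of Theorem \ref{t:chi-antimean} contains a neighborhood of the population antimeans.

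For $D_{n,c}^{(g)}$ the argument is identical, using $aT_n'$ in place of $aT_n$; the only extra ingredient, already recorded inside the proof of Theorem \ref{t:chi-antimean}, is that under the null $j(a\bar X_E) \to_p j(\alpha\mu_E)$ and that $\tan_{j(a\bar X_E)}$ is a consistent estimator of $\tan_{j(\alpha\mu_E)}$, so a Slutsky-type argument converts \eqref{eq:manova-large2} into the coverage statement $P\bigl(j(\alpha\mu_E) \in V_{n,c}\bigr) \to 1-c$, and therefore $P\bigl(\alpha\mu_E \in j^{-1}(V_{n,c})\bigr) \to 1-c$.

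I do not expect a serious obstacle: this corollary is the usual test--confidence-region duality, and all of the analytic work --- the joint central limit theorem for the $g$ sample antimeans, nonsingularity of the anticovariance matrices, and consistency of the $aS_{a,E}$ and of the tangential-projection operators --- has already been carried out for Theorem \ref{t:chi-antimean}. The one point deserving a line of care is that in $C_{n,c}^{(g)}$ the statistic $aT_n(\nu)$ uses $\tan_{j(\nu)}$ with $\nu$ the running candidate, so one must observe that at $\nu = \alpha\mu_E$ it coincides exactly with the quantity appearing in \eqref{eq:manova-large}; no uniformity of the convergence over $\nu$ is needed, since coverage is asserted only for the single true value $\alpha\mu_E$.
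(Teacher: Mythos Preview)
Your proposal is correct and follows precisely the route the paper intends: the corollary is stated in the paper without any proof, as an immediate consequence of Theorem~\ref{t:chi-antimean}, and your argument (invert the asymptotically $\chi^2_{gd}$ pivots of that theorem via the usual test--confidence-region duality) is exactly the derivation implied. One small wording point: the statements \eqref{eq:manova-large} and \eqref{eq:manova-large2} themselves have $j(a\bar X_E)$, not $j(\alpha\mu_E)$, inside the difference; what you actually need at $\nu=\alpha\mu_E$ is the intermediate display \eqref{eq:limit-g-groups} from the proof of Theorem~\ref{t:chi-antimean} (together with the consistency of $aS_{a,E}$), and you already invoke the relevant Slutsky/consistency ingredients, so the argument goes through unchanged.
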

For $a=1,...,g,$ let $~\{ X_{a,i} \}_{i=1,\dots, n_a}$ be i.i.d.r.o's from the $\alpha j$-nonfocal distributions $\mathcal{Q}_a.$ Let $\{X_{a,r}^{*} \}_{r=1,...,n_a}$ be random resamples with repetition  from the empirical $\hat{Q}_{n_a}$ conditionally given $\{ X_{a,i} \}_{i=1,\dots, n_a}.$ The confidence regions $C_{n, c}^{(g)}$ and $D_{n, c}^{(g)}$ described in Corollary \ref{cor:conf-reg-general-null}  have corresponding bootstrap analogues as given below.

\begin{corollary}\label{c2}
The $(1-c) 100 \%$ bootstrap confidence regions for $\alpha\mu_{E}$ with $d=g p$ are given by ${C^*}_{n, c}^{(g)}= j^{-1}(U^{*}_{n , c})$ and
\begin{equation}\label{cstar_g}
U^{*}_{n , c}= \{  j(\nu) \in j(\mathcal{M}):   \sum_{a=1}^g n_a \left\| aS_{a,E}^{-1/2}~\tan_{j(\nu)}(j(a\overline{X}_{a,E})-j(\nu))\right\|^2 \leq {c^*}^{(g)}_{1-c} \},
\end{equation}
where ${c^*}^{(g)}_{1-c}$ is the upper $100(1-c) \%$ point of the values
\begin{equation}
 \sum_{a=1}^g n_a \left\|  {aS_{a,E}^{*}}^{-1/2}~\tan_{j(a\bar{X}_{E})}(j(a\overline{X^*}_{a,E})-j(a\bar{X}_{E}))\right\|^2
\end{equation} among all bootstrap resamples, and ${D^*}_{n, c}^{(g)}= j^{-1}({V^*}_{n , c})$, with
\begin{equation}\label{dstar_g}
{V^*}_{n, c}= \{ j(\nu ) \in j(\mathcal{M}):  \sum_{a=1}^g n_a \left\| aS_{a,E}^{-1/2}~\tan_{j(\bar{X}_{E})}(j(a\overline{X}_{a,E})-j(\nu) )\right\|^2  \leq {d^*}^{(g)}_{1-c} \}
\end{equation}
where ${d^*}^{(g)}_{1-c} $ is the upper $100(1-c)\%$ point of the values
\begin{equation}
\sum_{a=1}^g n_a \left\|  {{aS_{a,E}^{*}}}^{-1/2}~\tan_{j({\alpha\bar{X}^*}_{E})}(j(a\overline{X^*}_{a,E})-j(\alpha\bar{X}_{E}))\right\|^2,
\end{equation}
and ${a\bar{X}^*}_{E}$ is the extrinsic pooled bootstrap sample antimean given by
\begin{equation}\label{E_Polled_sample_mean}
j({a\bar{X}^*}_{E})= P_j \left( \frac{n_1}{n} j(a\bar{X}^{*}_{1, E}) + \cdots + \frac{n_g}{n}j(a\bar{X}^{*}_{g, E}) \right).
\end{equation}
Both confidence regions given by \eqref{dstar_g} and \eqref{cstar_g} have coverage error $O_{p}(n^{-2}).$
\end{corollary}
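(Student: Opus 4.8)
The plan is to reduce both coverage statements to the accuracy of a bootstrap approximation for a single family of pivotal quadratic forms, and then to invoke the Edgeworth-expansion analysis of bootstrapped extrinsic estimators in the style of Bhattacharya and Patrangenaru. First, observe that $j(\nu)\in U^{*}_{n,c}$ exactly when the statistic
\[
T_n(\nu)\;=\;\sum_{a=1}^{g} n_a\bigl\| aS_{a,E}^{-1/2}\,\tan_{j(\nu)}\bigl(j(a\bar X_{a,E})-j(\nu)\bigr)\bigr\|^{2}
\]
does not exceed ${c^*}^{(g)}_{1-c}$, so that $P(\alpha\mu_E\in {C^*}_{n,c}^{(g)})=P\bigl(T_n(\alpha\mu_E)\le {c^*}^{(g)}_{1-c}\bigr)$; by Theorem~\ref{t:chi-antimean} the sampling law of $T_n(\alpha\mu_E)$ converges under $H_0$ to $\chi^2_{gd}$, and ${c^*}^{(g)}_{1-c}$ is exactly the bootstrap estimate of its upper $(1-c)$-quantile, obtained by substituting the resampled antimeans and anticovariances for their sample counterparts. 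The region ${D^*}_{n,c}^{(g)}$ is handled identically, the only difference being that the tangential projection is evaluated at the pooled sample antimean $j(a\bar X_E)$ instead of at the centering $j(\nu)$; since $a\bar X_E\to_p\alpha\mu_E$ and $\tan_{j(a\bar X_E)}$ consistently estimates $\tan_{j(\alpha\mu_E)}$ (as already used in the proof of Theorem~\ref{t:chi-antimean}), the two statistics share the same stochastic expansion to the order that matters.

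Second, I would check that $T_n$ is a smooth function of the concatenated vector of sample means $\overline{j(X_1)},\dots,\overline{j(X_g)}$. On the $\alpha j$-nonfocal set the farthest projection $P_{F,j}$, the adapted orthonormal frame field, the tangential projection, the chart $\varphi$, and the sample anticovariance matrices $aS_{a,E}$ (together with their bootstrap analogues) are all $C^{\infty}$ in the underlying means --- this is precisely the differentiability that Section~\ref{s:anti} exploits through the delta method --- while the nonsingularity hypothesis on the $\alpha\Sigma_{a,E}$ guarantees that the studentization is well defined throughout a fixed neighbourhood of the population antimeans. Moreover $T_n$ is a studentized quadratic (Hotelling-type) form, hence an even function of the standardized antimean deviations; this parity is what forces the half-integer-power terms in its Edgeworth expansion to vanish.

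Third, because each $Q_a$ has a nonzero absolutely continuous component with respect to the volume measure on $\mathcal M$ induced by $j$, the push-forward $j(Q_a)$ satisfies Cram\'er's condition, and it has finite moments of sufficiently high order by assumption; hence the Bhattacharya--Ghosh expansion for smooth functions of sample means applies and yields $P(T_n\le x)=\chi^2_{gd}(x)+n^{-1}\pi(x)+O(n^{-2})$, with $\pi$ a smooth functional of the moments of the $j(X_{a,1})$ and with no $n^{-1/2}$ (nor $n^{-3/2}$) term, by the parity just noted. The bootstrap statistic $T_n^{*}$ admits the parallel expansion $P^{*}(T_n^{*}\le x)=\chi^2_{gd}(x)+n^{-1}\pi^{*}(x)+O_p(n^{-2})$, with $\pi^{*}$ obtained from $\pi$ by replacing the population moments by the empirical ones; inverting the bootstrap expansion gives ${c^*}^{(g)}_{1-c}=\chi^2_{gd,1-c}+O_p(n^{-1})$, and substituting back into the sampling expansion --- using the cancellation of the odd-order terms, which is what removes the would-be $O_p(n^{-3/2})$ discrepancy --- yields $P\bigl(T_n\le {c^*}^{(g)}_{1-c}\bigr)=1-c+O_p(n^{-2})$. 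The identical computation with ${d^*}^{(g)}_{1-c}$ in place of ${c^*}^{(g)}_{1-c}$ proves the statement for ${D^*}_{n,c}^{(g)}$.

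The step I expect to be the real obstacle is the second one: rigorously placing the anti-MANOVA statistic inside the scope of the Edgeworth-expansion machinery --- verifying uniform smoothness of $P_{F,j}$ and of $aS_{a,E}$ in the sample means over a fixed neighbourhood of the nonfocal population antimeans, nonsingularity of the relevant anticovariance forms there, and, above all, that the symmetry of the Hotelling-type form genuinely kills the odd-order Edgeworth terms in the bootstrap world as well as for the sampling distribution, since it is exactly this cancellation that upgrades the naive $O_p(n^{-3/2})$ bootstrap accuracy to the claimed $O_p(n^{-2})$. The moment and Cram\'er conditions needed for validity of the expansions to that order are already built into the hypotheses, so no additional assumptions are required.
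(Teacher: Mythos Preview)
The paper does not supply a proof of this corollary at all: it is stated immediately after Corollary~\ref{cor:conf-reg-general-null} and Theorem~\ref{t:chi-antimean}, with the implicit understanding that it is the $g$-sample analogue of the one-sample bootstrap result Theorem~\ref{t:boot}, which itself is simply asserted (with attribution to the Bhattacharya--Patrangenaru framework) rather than proved. So there is no ``paper's own proof'' to compare against; what you have written is, in effect, the argument the paper defers to the literature.

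Your sketch is the correct underlying justification. The reduction of coverage to a bootstrap quantile approximation for the pivotal Hotelling-type statistic, the smooth-function-of-means representation needed for the Bhattacharya--Ghosh expansion, Cram\'er's condition via an absolutely continuous component, and the parity of the studentized quadratic form that eliminates the half-integer Edgeworth terms --- these are exactly the ingredients that deliver the $O_p(n^{-2})$ rate in Hall's analysis of symmetric bootstrap-$t$ regions, and they transfer without change to the extrinsic-antimean setting once one has the differentiability of $P_{F,j}$ on the $\alpha j$-nonfocal set. Two small remarks: first, the absolute-continuity and moment hypotheses you invoke are not written into the statement of the corollary itself but are borrowed from Theorem~\ref{t:boot}, so in a self-contained write-up you should flag them explicitly; second, your identification of the ``real obstacle'' is accurate --- the paper nowhere verifies the uniform smoothness of $P_{F,j}$ and of the sample anticovariances in a neighbourhood of the population antimeans, nor the parity cancellation in the bootstrap world, so a fully rigorous proof would indeed have to carry that out rather than simply cite it.
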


\noindent Note that
\begin{eqnarray}
aS_{a,E}^{*}=\left[ \left[\sum_{a=1}^{d} d_{a\overline{j^{(p)}(X^{*})}}P_{j}(e_b) \cdot e_{i}(j(\alpha\bar{X}^{*}_{E}))  ~e_{i}(j(\alpha\bar{X}^{*}_{E}))\right]_{i=1,...,p} \right] \cdot ~S^{*}_{n_a}\nonumber\\
\left[ \left[\sum_{a=1}^{m} d_{a\overline{j^{(p)}(X^{*})}}P_{j}(e_b) \cdot e_{i}(j(\alpha\bar{X}^{*}_{E})) e_{i}(j(\alpha\bar{X}^{*}_{E})) \right]_{i=1,...,p} \right]^T \nonumber
\end{eqnarray}
where $\displaystyle{S^{*}_{n_a}= (n_a )^{-1} \Sigma_{i=1}^{n_a}(j(X^{*}_{a,i})- j(\alpha\bar{X}^{*}_{E})) (j(X^{*}_{a,i})- j(\alpha\bar{X}^{*}_{E}))^T   }.$

\noindent In terms of nonparametric bootstrap approximations, for hypothesis testing, we will rely on the following result obtained
by substituting $$X^{(g)}= ({X_{1,a_1}})_{a_1=1,\dots,n_1},\cdots, ({X_{g,a_g}}_{a_g=1,\dots,n_g})$$
with resamples with repetition $$X^{*(g)}= ({X^{*}_{1,a_1}})_{a_1=1,\dots,n_1},\cdots, ({X^{*}_{g,a_g})})_{a_g=1,\dots,n_g}).$$

\begin{proposition}\label{p:asymptotic}
  For a $a=1,...,g$, let $\{ X_{a,i} \}_{i=1,\dots, n_a}$  i.i.d.r.o.'s from the $j$-nonfocal distributions $\mathcal{Q}_a.$  Let $\mu_a = E(j(X_{a,1}))$ and assume the extrinsic covariance matrice $a\Sigma_{a,E}$ of $X_{a,1}$ is nonsingular, $\forall a = 1, \dots, g.$
Then the distribution of
 $$T_{c}({X}^{(g)},{Q}^{(g)})= \sum_{a=1}^g n_a  \left\| a\Sigma_{a,E}^{-1/2}~\tan_{ j (\alpha\mu_E)}(j(a\overline{X}_{a,E})- j (\alpha\mu_E) )\right\|^2$$  can be approximated by the bootstrap distribution of\\
 ${T_{c}(X^{*(g)},\hat{Q}^{(g)})}=\sum_{a=1}^g n_a \left\|  aS_{a,E}^{-1/2}~\tan_{j(a\bar{X}_{E})}(j(a\overline{X}^{*}_{a,E})-j(a\bar{X}_{E}))\right\|^2.$
\item Similarly, the distribution of \\
$T_{d}({X}^{(g)},\hat{Q}^{(g)})=\sum_{a=1}^g n_a \left\| aS_{a,E}^{-1/2}~\tan_{j(a\bar{X}_{E})}(j(a\overline{X}_{a,E})-j(\alpha\mu_E) )\right\|^2 $  can be approximated by the bootstrap distribution function of\\
 ${T_{d}(X^{*(g)},\hat{Q}^{*(g)})}=\sum_{a=1}^g n_a \left\|  {aS_{a,E}^{*}}^{-1/2}~\tan_{j({a\bar{X}^*}_{E})}(j(a\overline{X^*}_{a,E})-j(a\bar{X}_{E}))\right\|^2$ with coverage error $O_P(n^{-2})$.
\end{proposition}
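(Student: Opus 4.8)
\medskip
\noindent\emph{Proof strategy.} The plan is to express each of $T_c$ and $T_d$, in both its true and its bootstrap form, as a smooth function of within-group sample averages of an i.i.d.\ array, and then to invoke the Edgeworth-expansion comparison between a studentized statistic and its bootstrap analogue, following Bhattacharya and Patrangenaru (2005)\cite{BhPa:2005} and the argument already used in Theorem \ref{t:boot}, with the nearest-point projection there replaced by the farthest-point projection $P_{F,j}$; the latter is $C^{\infty}$ on the complement of the anti-focal set, which is all that is needed.

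\smallskip
\noindent\emph{Step 1 (reduction to a smooth function of sample means).} For each group $a$ I would set $H_a(x)=\bigl(j(x),\,j(x)j(x)^{T}\bigr)$, which is bounded (as $\mathcal M$ is compact) and hence has finite moments of every order, and write $m_a=E\,H_a(X_{a,1})$, whose first block is $\mu_a$, nonfocal by hypothesis. On a neighborhood of $m_a$ the chain $\overline{H_a(X_a)}\mapsto\overline{j(X_a)}\mapsto P_{F,j}(\overline{j(X_a)})=j(a\bar X_{a,E})\mapsto\tan_{j(\alpha\mu_E)}\!\bigl(j(a\bar X_{a,E})-j(\alpha\mu_E)\bigr)$, together with the anti-covariance estimator $aS_{a,E}$ of \eqref{eq:ext-covariance} and its inverse square root (well defined since $a\Sigma_{a,E}$ is nonsingular), is $C^{\infty}$. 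Collecting the $g$ independent groups, $T_c=\tau_c\bigl(\overline{H_1(X_1)},\dots,\overline{H_g(X_g)}\bigr)$ with $\tau_c$ of class $C^{\infty}$ near $(m_1,\dots,m_g)$, nonnegative, and with leading term the nondegenerate quadratic form $\sum_a n_a Z_a^{T}Z_a$ in the group analogues $Z_a$ of the $j$-standardized sample antimean vector of Theorem \ref{t:CLT-antimean}; likewise for $T_d$. The bootstrap versions are the same functionals evaluated at the resample averages, now centered and studentized at the observed $\overline{H_a(X_a)}$ — which almost surely lie in the domain of smoothness for $n$ large, since $\overline{j(X_a)}\to_p\mu_a$ is nonfocal and $aS_{a,E}\to_p a\Sigma_{a,E}$ is nonsingular — and at the pooled observed antimean $j(a\bar X_E)$ in place of $j(\alpha\mu_E)$.

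\smallskip
\noindent\emph{Step 2 (Edgeworth expansions and their matching).} Since $\mathcal M$ is compact the moment requirements are automatic; assuming in addition, as in Theorem \ref{t:boot}, that each $\mathcal Q_a$ has a nonzero absolutely continuous component with respect to the $j$-induced volume measure on $\mathcal M$, the law of $H_a(X_{a,1})$ satisfies Cram\'er's condition on the coordinates that genuinely enter $\tau_c$. The second step is then to apply the Bhattacharya--Ghosh / Bhattacharya--Qumsiyeh expansion for smooth functions of sample means to obtain $P\bigl(T_c(X^{(g)},Q^{(g)})\le x\bigr)=G_{gd}(x)+n^{-1}\pi_1(x)+O(n^{-2})$ uniformly in $x$, where $G_{gd}$ is the $\chi^{2}_{gd}$ distribution function and $\pi_1$ depends smoothly on the population moments and on the derivatives of $\tau_c$ at $(m_1,\dots,m_g)$; the terms of order $n^{-1/2}$ and $n^{-3/2}$ drop because $T_c$ is, to leading order, an even function of the standardized group means with a spherically symmetric Gaussian limit. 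Conditionally on the data the bootstrap distribution of $T_c(X^{*(g)},\hat Q^{(g)})$ admits the identical expansion with the population moments replaced by their empirical counterparts; $n^{1/2}$-consistency of those moments, together with the cancellation of the half-integer terms and the fact that the leading $\chi^{2}_{gd}$ term is nonrandom, makes the two expansions agree up to $O_P(n^{-2})$. This is exactly the two-sided pivotal comparison (Hall (1992); Bhattacharya and Qumsiyeh (1989)) underlying Theorem \ref{t:boot}, and it delivers $\sup_x\bigl|P^{*}\!\bigl(T_c(X^{*(g)},\hat Q^{(g)})\le x\bigr)-P\bigl(T_c(X^{(g)},Q^{(g)})\le x\bigr)\bigr|=O_P(n^{-2})$. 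The argument for $T_d$ is identical, the only change being that the pooled sample antimean $j(a\bar X_E)$ replaces the population centering $j(\alpha\mu_E)$; the consistencies $j(a\bar X_E)\to_p j(\alpha\mu_E)$ and $aS_{a,E}\to_p a\Sigma_{a,E}$ established in the proof of Theorem \ref{t:chi-antimean} ensure the centering and studentization discrepancies perturb the statistic only at order $O_P(n^{-1})$ and hence do not disturb the $O_P(n^{-2})$ matching.

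\smallskip
\noindent\emph{Main obstacle.} The delicate part will be the hypotheses feeding Step 2: verifying Cram\'er's condition for the push-forward of $\mathcal Q_a$ under the nonlinear map $H_a$ restricted to the subspace that actually enters $\tau_c$ — i.e.\ tracking how the absolutely continuous component of $\mathcal Q_a$ survives composition with $j$ and with the eigen-decomposition defining $P_{F,j}$ — and checking that $T_c$, $T_d$ are genuinely of the form for which the $n^{-1/2}$ and $n^{-3/2}$ Edgeworth coefficients cancel, which requires careful bookkeeping of the first few derivatives of $\tau_c$ at $(m_1,\dots,m_g)$. By contrast, the smoothness of $P_{F,j}$ and of the tangential projections is not an issue, provided the means $\mu_a$ (by hypothesis) and the observed averages $\overline{j(X_a)}$ (almost surely, for $n$ large) are $\alpha j$-nonfocal.
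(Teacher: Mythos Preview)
The paper does not actually supply a proof of this proposition: it is stated immediately after Corollary \ref{c2} as ``the following result obtained by substituting'' the data by resamples, and the text then passes directly to the VW specialisation in Subsection \ref{ssc:manova-3Dproj}. In other words, the paper treats Proposition \ref{p:asymptotic} as the $g$-sample analogue of Theorem \ref{t:boot} (itself stated without proof), with justification implicitly deferred to Bhattacharya and Patrangenaru (2005)\cite{BhPa:2005}.

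Your proposal therefore goes considerably further than the paper does. The strategy you outline --- rewrite $T_c$ and $T_d$ as smooth functionals of within-group sample averages via $H_a(x)=(j(x),j(x)j(x)^T)$, invoke the Bhattacharya--Ghosh/Hall Edgeworth machinery, and match the true and bootstrap expansions term by term --- is exactly the argument underpinning Theorem \ref{t:boot} and is the correct way to substantiate the $O_P(n^{-2})$ claim. Your identification of the cancellation of the $n^{-1/2}$ and $n^{-3/2}$ terms (from the even/quadratic structure of the statistic) as the source of the $n^{-2}$ rate is on target, and your flagging of Cram\'er's condition as the genuine technical hurdle is accurate; the paper, like \cite{BhPa:2005}, handles this by the absolute-continuity hypothesis you mention, though here that hypothesis is present in Theorem \ref{t:boot} but is silently omitted from the statement of Proposition \ref{p:asymptotic}. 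So your proof is not merely consistent with the paper's approach --- it is the approach the paper gestures at but does not write out, and you have correctly noticed a missing hypothesis.
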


\subsection{VW Anti-MANOVA on $(\mathbb R P^{3})^q$}\label{ssc:manova-3Dproj}
\noindent In this subsection, we specialize the methods presented above in \ref{sc4} to anti-MANOVA on $P\Sigma_{3}^{k}$, the  projective shape space of 3D $k$-ads in $\mathbb RP^m$ for which $\pi=([u_1], \dots, [u_5])$ is a projective frame in $\mathbb RP^3,$ which is homeomorphic to the manifold $\left( \mathbb R P^3 \right)^{k-5}$ with $k-5=q$ (see Patrangenaru et. al (2010)\cite{PaLiSu:2010}). The embedding on this space is the VW embedding given in \eqref{eq:VW-emb}.

Additionally, from Proposition \ref{p:aVWprsh}, the corresponding farthest projection
\begin{eqnarray}\label{VW_Proj}
P_{j_q,F}: \left( S_{+}(4, \mathbb R )\right)^q \backslash  \mathcal F_q \to  j_k\left( \mathbb R P^3 \right)^q \notag\\
P_{j_q,F}(A_1, \dots, A_q)= \left(  j([m_1]), \dots, j[m_q])\right)
\end{eqnarray}
where $\forall a = 1,\dots, q, m_a$ is an eigenvectors of norm one of $A_a$, corresponding to its lowest eigenvalues, which is simple. That is same as saying that if $Y$ is a random object from a distribution $Q$ on $(\mathbb{R}P^3)^{q},$ where $Y = (Y^1, \dots, Y^q),$ and $Y^s= [X^s] \in \mathbb RP^3, s = 1, \dots, q,$ then T]the VW antimean of $Y$ is given by
\begin{equation}\label{eq:VW-mean-q}
\alpha\mu_{j_q}=([\gamma_{1}(1)], \cdots,[\gamma_q(1)]),
\end{equation}
 where, for $ s = \overline{1,q},~ \lambda_s(r)$ and $\gamma_s(r), r=1, \dots , 4$ are the eigenvalues in increasing order and the corresponding eigenvectors of $E\left[ X^s (X^s)^T \right].$
Given i.i.d.r.o.'s $Y_1, \dots, Y_n$ from a distribution $Q$ on $(\mathbb{R}P^3)^{q},$ with $Y_i = (Y^1_i, \dots, Y^q_i),$ and $Y^s_i= [X^s_i], {X^s_i}^TX^s_i=1,$ their sample VW-antimean is given in \eqref{eq:VW-sample-mean}, for $m=3$. The VW-anticovariance matrix ( anticovariance matrix associated with the VW embedding $j_q$) derived from  \eqref{eq:anti-cov} has the entries

\begin{eqnarray}\label{eq:anticov-VW-q}
aS_{j_q,(s,a),(t,b)}= n^{-1} (d_s(1)-d_s(a))^{-1} (d_t(1)- d_t(b))^{-1} \times \nonumber \\
\sum_{i=1}^{n} (g_s(a) \cdot X^s_i)(g_t(b) \cdot X^T_i)(g_s(1) \cdot X^s_i) (g_t(1) \cdot X^T_i),
\end{eqnarray}
for the pair of indices $(s,a),(t,b), s,t=1,\dots, q$ and $a, b = 2,3,4,$ listed in their lexicographic order, where for $a = 1,\dots, 4,$ $d_{s}(a), g_{s}(a)$ are the respectively eigenvalues in increasing order and  corresponding unit eigenvectors of
\begin{equation}\label{eq:j-s} J_{s}=n^{-1}\sum_{r=1}^{n}X_{r}^{s}(X_{r}^{s})^{T}
\end{equation}

Assume the VW anticovariance matrix $\alpha\Sigma_{j_q}$ is positive definite, thus given a large sample, with high probability the sample VW anticovariance matrix $aS_{j_q}$ has an inverse. Then, asymptotic distribution of the corresponding Hotelling $T^2$ type r.v.
\begin{equation}\label{eq:t2-anti}
T(Y,\alpha\mu_{j_q})= n \| {aS_{j_q}}^{-1/2} \tan_{j(a\overline{Y}_{j_q})} \left( j_q(a\overline{Y}_{j_q}) - j(\alpha\mu_{j_q})\right) \|^2
\end{equation}
is a $\chi^2_{3q},$ and its expression is
\begin{eqnarray}
T(Y,([\gamma_{1}(1)], \cdots,[\gamma_q(1)]))= n~\left( \gamma_{1}(1)^T D_1 \dots \gamma_{q}(1)^T D_q \right)~~{aS_{j_q}}^{-1}\cdot \nonumber \\ \cdot\left( \gamma_{1}(1)^T D_1 \dots \gamma_{q}(1)^T D_q \right)^T
\end{eqnarray}
where $aS_{j_q}$ and $D_s = (g_s(2)~ g_s(3) ~g_{s}(4)) \in \mathcal M(4,3,\mathbb R)$ are given as in \eqref{eq:anticov-VW-q}. We are in the position of giving the explicit expression of the test statistics that are addressing the VW anti-MANOVA hypothesis testing problem:

\begin{align}
\label{eq:hypot-vw-antimanova}
H_0 &:\ \alpha\mu_{1,E}=\alpha\mu_{2,E}=...=\alpha\mu_{g,E}= \alpha\mu_E,\\
H_a &:\ at\ least\ one\ equality\ \alpha\mu_{a,E}=\alpha\mu_{b,E}, 1 \leq a < b\leq g \ does\ not\ hold. \notag
\end{align}

Let $Y^{(g)} = (Y_{a,1},\dots,Y_{a,n_a})_{a=1,\dots,g}$ be independent r.o.'s from the distributions $Q_a, a=1,\dots,g$ on $( \mathbb R P^3)^q.$ We aim at having an explicit representation of the expression of the second test statistic
\begin{equation}
T_{d}(Y^{(g)},\hat Q^{(g)})=\sum_{a=1}^{g} { n_a} \| aS_{a,j_q}^{-1/2} \tan_{j_{q}}(a\bar{Y}_{j_q}) \left(j_{q}(a\bar{Y}_{j_q}) - j_{q}(\alpha\mu_{j_q}) \right)\|^2,
\end{equation}
from Proposition \ref{p:asymptotic}, where $\alpha\mu_{j_q},a\bar{Y}_{j_q}$ are respectively the pooled VW antimean and pooled sample VW antimean for the given data. Note that $\alpha\mu_{a,j_q}= ([\gamma_{1}^{a}(1)], \dots, [\gamma_{q}^{a}(1)])$ is the VW antimean for the sample from distribution $Q_a$ (of $Y_{a,1}, \dots, Y_{a,n_a}$) and $(\eta_{s}^{a}(r), \nu_{s}^{a}(r)),$ { $r=1,\dots,4$,} are eigenvalues and corresponding unit eigenvectors of $E(X_{a,1}^{s}(X_{a,1}^{s})^T]$.  The corresponding VW sample antimean is given by $a\overline{Y}_{a,j_q}=([g_{1}^{a}(1), \dots, [g_{q}^{a}(1)]),$ where for each $s=1, \dots, q$ and $r=1,\dots, 4$, $(d_{s}^{a}(r), g_{s}^{a}(r))$ are eigenvalues in increasing order and corresponding unit eigenvectors of $J_{s}^{a}= \frac{1}{n_a} \sum_{i=1}^{n_a}X^{s}_{a,i} (X^{s}_{a,i})^T.$ Also $\alpha\mu_{j_q}$ is the VW pooled antimean given by
\begin{equation}
j_{q}(\alpha\mu_{j_q})= P_{j_q,F}\left( \sum_{a=1}^{g} \lambda_a j_{q}(\alpha\mu_{a,j_q}) \right)\\
\alpha\mu_{j_q}=([\gamma_{1}^{(p)}(1)], \dots, [\gamma_q^{(p)}(1)]),
\end{equation}
where for $s=1,\dots, q, \gamma_{1}^{(p)}(1)$ is the eigenvector corresponding to the smallest eigenvalue of the $s-th$ axial component of the pooled matrix
with weights $\lambda_a, a=1,\dots, g, \sum_a \lambda_a =1, \lambda_a >0$ given by
$$\sum_{a=1}^g {\lambda_a}E(X_{a,1}X_{a,1}^T).$$

The pooled VW-sample antimean $a\overline{Y}^{(p)}_{j_q}$ is given by
\begin{eqnarray}
j_{q} \left(a\overline{Y}_{j_q} \right)= P_{j_q,F}\left( \sum_{a=1}^{g} \frac{n_a}{n} j_{q}(a\overline{Y}_{a,j_q})  \right)\\
a\overline{Y}^{(p)}_{j_q}= ([{\bf g}_{1}^{(p)}(1)], \dots, [{\bf g}_{q}^{(p)}(1)]).
\end{eqnarray}
Here for $s=1,\dots,q$,  ${\bf d}_{s}^{(p)}(r)$ and ${\bf g}_{s}^{(p)}(r) \in \mathbb R^4,~r=1,2,3,4,$ are eigenvalues in increasing order and corresponding unit eigenvectors of the matrix { $J^{(p)}=\sum_{a=1}^{g} \frac{n_a}{n} j_{k}(\overline{Y}_{a,E}) .$}\\

The following matrices
\begin{eqnarray}\label{eq:basis-tang-sample-pooled}
{\bf D}_s=({\bf g}_{s}^{(p)}(2)~ {\bf g}_{s}^{(p)}(3) ~{\bf g}_{s}^{(p)}(4)) \in \mathcal M (4,3:\mathbb R)
\end{eqnarray}
are giving a basis in the tangent space of the pooled sample VW antimean.

\begin{theorem}\label{t:chi-square-vw}
Assume $\{Y_{a, r_a}\}_{r_a=1, \dots,n_a},~a=1, \dots,g$ are i.i.d.r.o.'s from  the $j_q$-nonfocal probability measures $Q_a$ on $(\mathbb RP^3)^q$ with the VW embedding of $j_q$ leading to nondegenerate $j_q$-extrinsic anticovariance matrices. Consider the statistic

\begin{eqnarray}T_{d}( Y^{(g)},a\overline{Y}_{j_q})= \sum_{a=1}^{g} n_a ~\left[(  \gamma_{1}^{(p)}(1)-g_{1}^{a}(1) )^T {\bf D}_1 \dots (\gamma_{q}^{(p)}(1)-g_{q}^{a}(1) )^T {\bf D}_q \right]~~ \nonumber \\
aS_{a,j_q}^{-1} \nonumber \\
\left[(  \gamma_{1}^{(p)}(1)-g_{1}^{a}(1) )^T {\bf D}_1 \dots (\gamma_{q}^{(p)}(1)-g_{q}^{a}(1) )^T {\bf D}_q \right]^T,
\end{eqnarray}
where
\begin{eqnarray}
 {aS_{a,j_q}}_{(s,c)(t,b)}= n_{a}^{-1} ({\bf d}_{s}^{(p)}(1) - {\bf d}_{s}^{(p)}(c))^{-1} ({\bf d}_{t}^{(p)}(1) - {\bf d}_{t}^{(p)}(b))^{-1} \nonumber\\
 \times \sum_{i} ({\bf g}^{(p)}_{s}(c)\cdot X_{a,i}^{s})({\bf g}^{(p)}_{t}(b) \cdot X_{a,i}^{t})({\bf g}^{(p)}_{s}(1) \cdot X_{a,i}^{s})({\bf g}^{(p)}_{t}(1) \cdot X_{a,i}^{t}) \nonumber
 \end{eqnarray}
  and $s,t=1, \dots,q$ and $c,b= 2, 3, 4.$
If $\frac{n_a}{n} \to \lambda_a >0,$ as $n \to \infty,$ then $T_{d}( Y^{(g)},a\overline{Y}_{j_q})$ converges weakly to a $\chi^{2}_{3q}$ distributed r.v.
\end{theorem}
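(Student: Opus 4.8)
The plan is to read this statement as the explicit Veronese--Whitney instantiation, on the compact manifold $\mathcal M=(\mathbb RP^3)^q$ of dimension $3q$ with the embedding $j_q$, of Theorem~\ref{t:chi-antimean} (equivalently of the statistic $T_d$ in Proposition~\ref{p:asymptotic}). All the hypotheses of that theorem hold by assumption: $j_q$ is an embedding, the $Q_a$ are $j_q$-nonfocal, the $j_q$-extrinsic anticovariance matrices are nondegenerate, and $n_a/n\to\lambda_a>0$. So the whole task is to verify that the test vector appearing in $T_d(Y^{(g)},a\overline Y_{j_q})$ and the matrices $aS_{a,j_q}$ displayed in the statement are exactly the VW forms of the general quantities $\tan_{j(\alpha\mu_E)}(j(a\bar X_{a,E})-j(\alpha\mu_E))$ and of the consistent anticovariance estimator \eqref{eq:ext-covariance}; once that is done the conclusion is immediate from Theorem~\ref{t:chi-antimean}.

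First I would compute the differential of the farthest projection. By Proposition~\ref{p:aVWprsh} and \eqref{VW_Proj}, $P_{j_q,F}$ acts factorwise, sending each symmetric $4\times4$ matrix $A$ with a simple smallest eigenvalue to $mm^T$, where $m$ is a unit eigenvector for that eigenvalue. With eigenvalues $\lambda(1)<\lambda(2)\le\lambda(3)\le\lambda(4)$ and orthonormal eigenvectors $\gamma(1),\dots,\gamma(4)$, first-order eigenprojection perturbation --- the analogue, with the \emph{smallest} eigenvalue in place of the largest, of the computation used for extrinsic means by Bhattacharya and Patrangenaru --- gives, for symmetric directions $H$,
\[
d_A P_{j_q,F}(H)=\sum_{r=2}^{4}\frac{\gamma(r)^TH\,\gamma(1)}{\lambda(1)-\lambda(r)}\bigl(\gamma(1)\gamma(r)^T+\gamma(r)\gamma(1)^T\bigr),
\]
which on $\mathcal M$ is block diagonal over the $q$ axial components. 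The tangent space $T_{j([m])}j(\mathbb RP^3)$ is $\{mv^T+vm^T:\ v\perp m\}$, with $m\,\gamma(r)^T+\gamma(r)\,m^T$, $r=2,3,4$, an orthogonal basis in the trace inner product; normalizing these vectors and taking the product over the $q$ factors yields a frame field adapted to $j_q$.

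Next I would carry out the two identifications. For the test vector: writing the $s$-th block difference as $j([g_s^a(1)])-j([\gamma_s^{(p)}(1)])=g_s^a(1)g_s^a(1)^T-\gamma_s^{(p)}(1)\gamma_s^{(p)}(1)^T$ and taking its tangential part at $j([\gamma_s^{(p)}(1)])$ in the basis built from the pooled sample eigenvectors $\mathbf g_s^{(p)}(2),\mathbf g_s^{(p)}(3),\mathbf g_s^{(p)}(4)$, the coordinates linearize (using $\|g_s^a(1)\|=\|\gamma_s^{(p)}(1)\|=1$) to the entries of $(\gamma_s^{(p)}(1)-g_s^a(1))^T\mathbf D_s$; concatenating over $s=1,\dots,q$ reproduces the $3q$-row in $T_d$. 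For the anticovariance: substituting the differential above (with $A=J_s^a$, or the pooled sample matrix, and eigendata $\mathbf d_s^{(p)},\mathbf g_s^{(p)}$) together with the scatter matrix of the $a$-th sample into \eqref{eq:ext-covariance}, and then reading off the bilinear-form components in the same basis, collapses the ambient double sum and reproduces exactly the displayed entries ${aS_{a,j_q}}_{(s,c)(t,b)}$: the factors $(\mathbf d_s^{(p)}(1)-\mathbf d_s^{(p)}(c))^{-1}$ are the denominators in $d_AP_{j_q,F}$, and the quartic sum over $i$ is the contraction of the scatter matrix against $\mathbf g_s^{(p)}(c),\mathbf g_t^{(p)}(b),\mathbf g_s^{(p)}(1),\mathbf g_t^{(p)}(1)$ --- i.e.\ it is the $a$-th-sample, pooled-frame version of \eqref{eq:anticov-VW-q}. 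By the consistency statements preceding Theorem~\ref{t:chi-antimean}, $aS_{a,j_q}$ consistently estimates $\alpha\Sigma_{a,j_q}$, and the pooled tangent frame is consistent for the one at the common antimean.

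Finally, under \eqref{eq:hypot-vw-antimanova} every $E(X_{a,1}X_{a,1}^T)$ has the common antimean direction as its (simple) smallest-eigenvalue eigenvector, hence so does the pooled matrix $\sum_a\lambda_aE(X_{a,1}X_{a,1}^T)$; thus $[\gamma_s^{(p)}(1)]$ is the common VW antimean in the $s$-th factor, and by the CLT for sample VW antimeans (Theorem~\ref{t:CLT-antimean}) we get $\sqrt{n_a}\,\tan(j_q(a\bar Y_{a,j_q})-j_q(\alpha\mu_{j_q}))\to_d N(0,\alpha\Sigma_{a,j_q})$ for each $a$. Slutsky, with the consistency of $aS_{a,j_q}$ and of the pooled tangent frame, turns each group summand into an asymptotic chi-square, and independence of the $g$ samples yields the asserted limit exactly as in Theorem~\ref{t:chi-antimean}. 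I expect the main obstacle to be the bookkeeping in the two identifications: getting the perturbation formula right for the farthest (smallest-eigenvalue) projection, and tracking the $\sqrt2$ normalizations of the $mv^T+vm^T$ basis vectors and the signs, so that the constants cancel consistently between the $T_d$-vector and $aS_{a,j_q}$; beyond that, the argument is a direct appeal to the general anti-MANOVA theorem.
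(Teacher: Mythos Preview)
Your proposal is correct and follows essentially the same approach as the paper: the paper's proof simply says to apply Theorem~\ref{t:chi-antimean} to the VW embedding $j_q$ of $(\mathbb RP^3)^q$ and then to express the tangential components of $j_q(a\bar X_{a,j_q})-j_q(a\bar X_{j_q})$ in the orthogonal basis~\eqref{eq:basis-tang-sample-pooled}. Your write-up carries out in detail exactly the two identifications (of the test vector and of the sample anticovariance) that the paper leaves implicit, including the eigenprojection perturbation formula for $d_AP_{j_q,F}$ and the reduction of \eqref{eq:ext-covariance} to the displayed entries of $aS_{a,j_q}$; this is precisely the content the paper compresses into one sentence.
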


\begin{proof}
The asymptotic behaviors of the sample VW-antimeans follow from Theorem \ref{t:chi-antimean}, when applied to the VW embedding $j_q$ of
$(\mathbb RP^3)^q$ (a.k.a. projective shape space of $q+5$-ads in general position in $\mathbb RP^3$ ) given in \eqref{eq:VW-emb}.
Indeed from \eqref{eq:manova-large}, we split the difference $j_q(a\bar{X}_{a,j_q})-j(a\bar{X}_{j_q})$ in the tangent space
$T_{j_q(a\bar{X}_{a,j_q})}(j_q(\mathbb RP^3)^q),$ w.r.t. the orthogonal basis described in \eqref{eq:basis-tang-sample-pooled}.
\end{proof}

\begin{corollary}\label{c:boot-conf-reg-vw-antimean}
A $(1-c) 100 \%$ nonparametric bootstrap confidence region for $\alpha\mu_{j_q}$ is given by
\begin{equation}\label{eq:boot-conf-reg-vw-antimean}{D^*}_{n, c}^{(g)}= j^{-1}({V^*}_{n , c}),
\end{equation}
where
${V^*}_{n, c}= \{ j_k(\nu), T_{d}( Y^{(g)},a\overline{Y}_{j_q}, \nu) \leq  {d^*}^{(g)}_{1-c} \}$ and
$T_{d}( Y^{(g)},a\overline{Y}_{j_q}, \nu )={n_a} \sum_{a=1}^{g} \left\| aS_{a,j_q}^{-1/2} \tan_{j_{q}(a\overline{Y}_{j_q})} (j_{q}(a\overline{Y}_{j_q}) - j_{q}(\nu))\right\|^2$
with ${d^*}^{(g)}_{1-c} $ being the upper $100(1-c)\%$ point of the values of
\begin{equation}\label{eq:manova-boot}
{T_{d}({Y^*}^{(g)},a\overline{Y}^*_{j_q}, a\overline{Y}_{j_q})=\sum_{a=1}^{g} n_a~ \left\| {aS_{a,j_q}^{*}}^{-1/2}~\tan_{j_{q}(a\overline{Y}^*_{j_q})}(j_{q}(a\overline{Y}^{*}_{a,j_q}) - j_{q}(a\overline{Y}_{j_q}))\right\|^2},
\end{equation}
among the bootstrap resamples, where
\begin{eqnarray}
 {aS_{a,j_q}^{*}}_{(s,c)(t,b)}= n_{a}^{-1} ({\bf d}_{s}^{*(p)}(1) - {\bf d}_{s}^{*(p)}(c))^{-1} ({\bf d}_{t}^{*(p)}(1) - {\bf d}_{t}^{*(p)}(b))^{-1}\times \nonumber \\
 \sum_{i} ({\bf g}^{*(p)}_{s}(c)\cdot X_{a,i}^{*s})({\bf g}^{*(p)}_{t}(b) \cdot X_{a,i}^{*t})({\bf g}^{*(p)}_{s}(1) \cdot X_{a,i}^{*s})({\bf g}^{*(p)}_{t}(1) \cdot X_{a,i}^{*t}) , b, c = 2,3,4.\nonumber
 \end{eqnarray}
The confidence regions given by \eqref{eq:boot-conf-reg-vw-antimean} has coverage error $O_{p}(n^{-2}).$
\end{corollary}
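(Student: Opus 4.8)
The plan is to obtain \eqref{eq:boot-conf-reg-vw-antimean} as the specialization to the VW embedding $j_q$ of $(\mathbb RP^3)^q$ of the general bootstrap confidence region in Corollary \ref{c2} and Proposition \ref{p:asymptotic}, combined with the explicit eigen-analytic formulas for $\alpha\mu_{j_q}$, $a\overline{Y}_{j_q}$ and $aS_{a,j_q}$ already recorded in Proposition \ref{p:aVWprsh} and in \eqref{eq:anticov-VW-q}. First I would note that, by Theorem \ref{t:chi-square-vw}, the statistic $T_d(Y^{(g)},a\overline{Y}_{j_q},\alpha\mu_{j_q})$ converges weakly to $\chi^2_{3q}$; hence it is an asymptotically pivotal quantity and the set $D_{n,c}^{(g)}$ obtained by inverting it about the unknown $\alpha\mu_{j_q}$ (as in Corollary \ref{cor:conf-reg-general-null}) has asymptotic coverage $1-c$. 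The bootstrap region ${D^*}_{n,c}^{(g)}$ is then obtained in the usual way: the unknown pooled antimean and the population anticovariances are replaced by their sample analogues, the sample quantities by resample analogues, and the cutoff $\chi^2_{3q,1-c}$ is replaced by the empirical $(1-c)$-quantile ${d^*}^{(g)}_{1-c}$ of the resampled statistic \eqref{eq:manova-boot}.

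The substance of the claim is the $O_p(n^{-2})$ coverage error, and for this I would invoke the Edgeworth-expansion theory for the bootstrap of studentized statistics that are smooth functions of sample means, exactly as in \cite{BhPa:2003,BhPa:2005} and \cite{PaGuYa:2016} and as already used to prove Theorem \ref{t:boot}. The key structural observation is that, because each $Q_a$ is $\alpha j_q$-nonfocal with nondegenerate $j_q$-extrinsic anticovariance, the smallest eigenvalue of each axial block $E(X^s_{a,1}(X^s_{a,1})^T)$ — and of the corresponding block of the pooled matrix $\sum_a\lambda_a E(X_{a,1}X_{a,1}^T)$ — is simple; by standard perturbation theory the associated eigen-projection, and hence the farthest projection $P_{j_q,F}$, is real-analytic on a neighborhood of each population mean and of the pooled mean. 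Consequently $T_d(Y^{(g)},a\overline{Y}_{j_q},\alpha\mu_{j_q})$ and its studentized resample version are smooth functions of the per-group sample moment matrices $J^a_s$ (respectively $J^{*a}_s$), of precisely the form covered by that theory. The hypotheses imported from Theorem \ref{t:boot} and Proposition \ref{p:asymptotic} — a nonzero absolutely continuous component, which forces Cramér's condition on the joint vector of the relevant sample statistics, together with finite moments of sufficiently high order — then guarantee that the one-term Edgeworth expansions of the true and bootstrap distributions of the pivot agree up to $O_p(n^{-3/2})$; since the pivot is studentized, the $n^{-1/2}$ Edgeworth term is an even polynomial in the Gaussian variate, so the two-sided region self-corrects and its coverage error is $O_p(n^{-2})$. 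One then transports the conclusion through $j_q^{-1}$, which is legitimate since $j_q$ is an embedding, yielding the stated region.

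The main obstacle I expect is the regularity bookkeeping underlying the last paragraph: one must check that the simple-smallest-eigenvalue property persists on a full neighborhood of the pooled mean for the population matrix \emph{and}, with probability $1-o(n^{-2})$, for the sample and bootstrap pooled matrices, so that the studentized statistic is genuinely a smooth function of means on an event of probability $1-o(n^{-2})$; and one must verify that Cramér's condition transfers from each $Q_a$ to the composite $\mathbb R^{N_q}$-valued statistic built from all $g$ groups and all $q$ axial blocks, so that the Edgeworth machinery can be applied blockwise and then assembled. Once this smoothness-plus-nondegeneracy platform is in place, the coverage bound is a direct application of the cited bootstrap-Edgeworth results.
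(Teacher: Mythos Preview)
Your proposal is correct and follows essentially the same route the paper intends: the corollary is stated in the paper without proof, as an immediate specialization of Corollary~\ref{c2} and Proposition~\ref{p:asymptotic} to the VW embedding $j_q$, with Theorem~\ref{t:chi-square-vw} supplying the pivotal limit and the $O_p(n^{-2})$ claim inherited verbatim from the general bootstrap results (which in turn defer to \cite{BhPa:2003,BhPa:2005,PaGuYa:2016} for the Edgeworth machinery). Your write-up is in fact more detailed than the paper's own treatment, spelling out the eigen-regularity and Cram\'er-condition bookkeeping that the paper leaves implicit.
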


\section{Application to face data analysis}\label{sc:faces}
Digital images collected with a high resolution Panasonic-Lumix DMC-FZ200 camera, posted at ani.stat.fsu.edu/$\sim$vic/E-MANOVA, were used to test for a VW mean 3D projective shape difference between five faces (see Yao et al(2017)\cite{YaPaLe:2017}). The 3D surface reconstructions of those faces, with the seven labeled landmarks, and a projective frame are displayed in Figures in Yao et al(2017)\cite{YaPaLe:2017}.

 Here we compare the projective shapes of these faces by first conducting a VW anti-MANOVA analysis on $P \Sigma_{3}^{7}=(\mathbb RP^3)^{2},$
 testing the hypotheses \eqref{eq:hypot-vw-antimanova}, based on the sample sizes on hand: $n_1=n_2=n_4=n_5=6$ and $n_3=7,$ the null hypothesis being rejected if
$$T_{d}( {Y}^{(5)},a\overline{Y}_{j_2})=\sum_{a=1}^{5} n_a~ \left \| S_{\alpha\bar Y_{a},j_2}^{-1/2}~\tan_{j_{2}(a\overline{Y}_{j_2})} (j_{2}(a\overline{Y}_{a,j_2}) - j_{2}(a\overline{Y}_{j_2})) \right\|^2$$ is greater than ${d^*}^{(5)}_{1-\alpha},$
where ${d^*}^{(5}_{1-\alpha}$ is the $(1-\alpha)100\%$ cutoff of the corresponding bootstrap distribution in equation \eqref{eq:manova-boot}. With $5,000$ bootstrap resamples, we obatain $T_{d}( {y}^{(5)},a\overline{y}_{j_2})= 26,848.81$, and their corresponding empirical $p$-value  $0.0088$. Thus concluding that there exists a statistically significant VW-antimean 3D-projective shape face difference between at least two of the individuals in our data set.
\noindent We then ran pairwise tests for antimean projective shape changes from subsection \ref{ssc:2vw-anti}, and obtained the following table

\begin{table}[H]
\footnotesize
\centering
\begin{tabular}{llllllllllllllllll}
  \hline
 & (1,2) & (1,3) & (1,4)  & (1,5) & (2,3) & (2,4) & (2,5) & (3,4) & (3,5) & (4,5) \\
  \hline
Test result & Reject & Reject & Reject &   No &   Reject &   Reject  & No &   No &  No &  No \vspace{.1cm}\\
   \hline
\end{tabular}\caption{Results of pairwise VW mean change}\label{facetest}
\end{table}

\section*{Discussion}

In this paper in the case of a r.o. on a compact manifolds, we study a recently introduced location parameter, the Fr\'echet antimean (set), with en emphasis on the extrinsic antimean, leading to new statistics, such as the sample extrinsic antimean, and the sample extrinsic anticovariance matrix. Just as with the extrinsic mean, the extrinsic antimean captures important features of a distribution on a compact object space. More general location parameters, means of a given index were introduced in Patrangenaru et al.(2019)\cite{PaBuPaOs:2019}.  While our results extend to the general case of arbitrary Fr\'echet antimean, for the purpose of data analysis, extrinsic antimeans are likely to be faster to compute (see Bhattacharya et al.(2012)\cite{BhElLiPaCr:2012}), and easier to characterize than their general Fr\'echet counterparts.
Therefore future research for extrinsic means of given indices, including stickiness of extrinsic antimeans and extrinsic antiregression (see
Deng et al (2018)\cite{DeBaPa:2018}), should parallel research on inference for extrinsic means (see Hotz et al(2013)\cite{Hotz:2013}, Bhattacharya et al(2014)\cite{BhElLiPaCr:2012}, Petersen and M\"{u}ller(2017)\cite{PeMu:2019}). Given that data comes these days in some form of electronic images, more emphasis should be put on collecting large samples of picture of 3D scenes, 3D image analysis, especially extracting and analyzing 3D projective shape and color info from digital camera images and medical imaging outputs.

\end{document}